\numberwithin{equation}{section}
\newtheorem{theorem}{Theorem}[section]
\newtheorem{proposition}[theorem]{Proposition}
\newtheorem{remark}{Remark}
\newtheorem{definition}{Definition}
\newtheorem{assumption}{Assumption}
\numberwithin{equation}{section}
\newcommand{\xx}{{\textbf x}}
\newcommand{\ee}{{\mathrm e}}
\newcommand{\dd}{{\mathrm d}}
\newcommand{\ii}{{\mathrm i}}
\newcommand{\error}{\mathcal{E}_\omega^h}
\definecolor{light-gray}{gray}{0.98}
\DeclareMathOperator{\R}{\mathbb{R}}
\title{Effective  highly accurate time integrators for linear Klein--Gordon equations across the scales}
\begin{document}
	
\author{Karolina Kropielnicka\footnote{Institute of Mathematics, Polish Academy of Sciences, Warsaw, Poland} \and Karolina Lademann\footnote{Institute of Mathematics, Physics and Computer Science, University of Gda\'{n}sk, Gda\'{n}sk, Poland} \and Katharina Schratz \footnote{Laboratoire Jacques-Louis Lions, Sorbonne Universit\'e, Paris, France}}

		\maketitle

	\abstract{We propose an efficient approach for  time integration of  Klein--Gordon equations with  highly oscillatory in time input terms. The new methods are highly accurate in the entire range, from slowly varying up to highly oscillatory regimes. Our approach is based on splitting methods tailored to the structure of the input term which allows us to  resolve the oscillations in the system uniformly in all frequencies, while the error constant does not grow as the oscillations increase. Numerical experiments highlight our theoretical findings and  demonstrate  the efficiency of the new schemes.}

% \linenumbers

\section{Introduction}

High oscillation is a phenomenon occurring in a very wide range of problems in science and engineering and it represents an enduring computational challenge. The main reason is that classical numerical analysis rests upon the concept of truncated Taylor expansion with a small error constant, but this need not be true once high oscillation is present: think for example about the amplitude of the derivatives of $\cos\omega x$ for $\omega\gg1$. Highly oscillatory phenomena, not least highly oscillatory partial differential equations, have been a subject of very active research in the last three decades \cite{hmm,hi-ocs-int,hop}. As things stand, there exists a fairly robust theory for phenomena which are driven by high frequency, as well as of course problems that exhibit no high oscillation. However, there exists a dearth of computational tools for problems, often to be found in applications, where oscillation occurs across a wide range of scales, from non-oscillatory to highly oscillatory. This paper is concerned with problems of this kind. While our long-term objective is to fashion tools for general time-dependent partial differential equations of this type, the paper focusses on the specific example of linear Klein--Gordon equation with highly oscillatory term. We bring a wide range of tools to bear on this problem, not least the Magnus expansion and exponential splittings, while accompanying our presentation with careful error estimates and numerical examples. 

We consider the  equation
\begin{eqnarray}\label{KG1}
	\partial^2_t   \psi(\xx ,t) & =& \Delta  \psi(\xx ,t) + f (\xx,t) \psi(\xx ,t) 
\end{eqnarray}
with  initial conditions $\psi(\xx,0) = \psi_0(\xx)$, $\partial_t \psi (\xx,t_0) = \varphi_0(\xx)$ and  periodic boundary conditions\footnote{The assumption of periodic boundary conditions is solely for the sake of simplicity of exposition and all proposed methods can be easily generalised to other boundary conditions.}, that is $\xx\in \mathbb{T}^d$. Here, $f(\xx,t)$ is a given function of the form
\begin{eqnarray}\label{input_term}
	f(\xx,t)& =&\alpha(\xx,t)+\sum_{|n|\leq N}a_n(\xx,t)\ee^{\ii \omega_n t},
\end{eqnarray}
where $n\in\mathbb{N}$, $\omega_n\in\mathbb{R}$, $\omega_{\min}=\min_{n\leq N}|\omega_n|\geq 1$ and $\omega_{\max}=\max_{n\leq N} |\omega_n|<\infty$. Functions $\alpha(\xx,t)  $ and  $a_n(\xx,t)$, $n \leq N$, do not oscillate in the $\omega_n$s (i.e. their time derivatives are bounded independently of the $\omega_n$s). 
It is enough to assume smoothness of the functions $\alpha$ and $a_n$. For the precise assumptions on system \eqref{KG1} we refer to Section \ref{ESTIMATES}. 

A special case of (\ref{KG1}) is the Klein--Gordon equation in atomic scaling (that is where the Planck constant $\hbar$ and speed of light $c$ equal $1$) with  time and space dependent mass ($ m^2 = m^2(\xx,t) $) which was first proposed by M.\@ Znojil in \cite{zno1} and \cite{zno2} and which finds its applications in quantum cosmology, see \cite{mos1}, \cite{zno3}. The presence of high oscillations in the Klein--Gordon equation can have important implications for the behavior of the scalar field. For example, it can lead to the production of particles in differential equations modeling early universe evolution.

Linear and nonlinear Klein--Gordon equations have recently gained a lot of attention in computational mathematics, see for instance \cite{bao2,12,6,shark,yus}, and  references therein. The highly oscillatory case $\omega_{\rm max} \gg 1$ is thereby particularly challenging as classical methods in general fail to resolve the underlying oscillatory structure  of the solution, leading to large errors and huge computational costs. Up to our knowledge there are only three works devoted to the computational approaches in the case of space- and time-dependent input terms. The earliest result is described in  \cite{baderblanes}, where authors propose  fourth and sixth order splittings based on the commutator-free, Magnus type expansions. The methods perform very well in the non-oscillatory regimes, however, introduce large error constants in case of high frequencies $\omega_{\rm max} \gg 1$. The result obtained in \cite{asympt} proposes a  method specialized to purely highly oscillatory input terms meaning that  $\omega_{\rm min} \gg 1$. The drawback of the method is that it  drastically  fails in the non-oscillatory regimes  where $\omega_{\rm min} \sim 1$. Note that none of these methods deals effectively in case of $\omega_{\rm min} \sim 1$ and $\omega_{\rm max} \gg 1$.  Recently obtained, unpublished yet, result, see \cite{3order},  proposes a third order uniform in low to high oscillatory regimes method based on the Duhamel principle approach. Note that classical methods, like finite differences for example,  for \eqref{KG1} introduce a time error of type $(h  \cdot \omega_{\max})^p$ leading to severe step size restrictions $h < 1/\omega_{\max}$, loss of convergence and huge computational costs in the highly oscillatory cases.\\

In this paper we substantially improve all the aforementioned results. Unlike \cite{baderblanes} and \cite{asympt}, the new methods are effective for all three kinds of regimes: slowly oscillatory ($ \omega_{\rm max} \sim 1$), highly oscillatory ($ \omega_{\rm max}\gg 1$) and ranging across all oscillation scales ($\omega_{\rm min}\sim 1$ and $\omega_{\rm max}\gg 1$) in the input term. This is a challenging goal because, while non-oscillatory terms are traditionally modelled using a Taylor expansion, highly oscillatory terms yield themselves best to asymptotic expansions.

The proposed approach improves upon the method described in \cite{3order}. First of all,  third-order convergence is raised to  fourth-order. The concept of order, however, is of purely formal nature as the desired convergence rate is not affected by large error constants only once $h <1/\omega_{\max}$. What matters in  realistic applications (especially when $h \geq1/\omega_{\max}$) is the accuracy in which the proposed schemes outperform all other existing methods. This can be observed both  theoretically and experimentally.

Theoretical investigation confirms that the proposed methods optimise precision in different regimes of step size and frequency. As claimed in Theorem \ref{thm:errorS} of this paper, the leading term of the error scales like $\mathcal{O}(h^5+\min\{h^3,h^2/\omega_{\min},h^5\omega_{\max}^2\})$ and like $\mathcal{O}(\min\{h^3,h^2/\omega_{\min},h^5\omega_{\max}^2\})$ in the special case $\alpha(\xx,t)\equiv 0$. The most pessimistic case of this estimate equals the most optimistic case obtained in  \cite{3order}.

Section \ref{EXPERIMENTS} abounds with numerous computational simulations and comparisons showing that the accuracy of the method, not its formal order,  plays the crucial role in the case of high oscillatory problems. In particular we refer to  \textit{Example 5\/}, see Fig. \ref{fig:omegas_sum}, where the whole range of oscillations appear in the input term and where the accuracy and efficiency of the proposed method prove to be outperforming all other approaches.

Let us emphasise that in this paper we are interested solely in time integration, since the problematic oscillations occur in time. We are not concerned with the specific method of space discretisation, which can be chosen depending on the boundary conditions,  constraints arising from applications and  the preferences of the user. In our numerical examples we  resort to Fourier spectral collocation in space.

The new approach is based on well known tools like Magnus expansion, Strang splitting and compact splittings. The motivation and  derivation of the methods is presented in Section \ref{DERIVATION}. Section \ref{ESTIMATES} deals with  rigorous estimates of the leading error terms, which were disregarded in the derivation of the schemes. In Section \ref{ERRORS} we provide the structure of the leading error terms committed by the proposed schemes and illustrate it graphically. In Section \ref{EXPERIMENTS} we illustrate our theoretical findings with  numerical experiments. In addition, we compare  various  existing methods with the new approach and highlight the behavior of the latter with respect to high frequencies  $\omega_n$. Appendices \ref{apA} and \ref{strang_est} present calculations which may be of interest while reading Section \ref{ESTIMATES}.

\section{Derivation of the methods}\label{DERIVATION}

We are looking for solutions $\psi(t):=\psi(\,\cdot\,,t) \in C^2([0,T],H^{s+6}(\mathbb{T}^d))$ and assume that $f(t):=f(\,\cdot\,,t) \in C^2([0,T],H^{s+4}(\mathbb{T}^d))$, $s\geq 0$. Let us observe that equation (\ref{KG1}) can be analyzed in its abstract form
\begin{eqnarray}\label{system_for_z}
	\partial_t z(t)=A(t)z(t), \quad 
	{\rm where}\ \quad
	z(t)=\left[  \begin{array}{c}
		\psi(t) \\
		\partial t/ \partial \, \psi(t) 
	\end{array} \right],
\end{eqnarray}
where
\begin{eqnarray}\label{A}
	A(t)=
	\left[  \begin{array}{cc}
		0 & 1 \\
		N(t) & 0 
	\end{array} \right]
	\quad
	{\rm with}\ \quad
	N(t)=\Delta +f(t).
\end{eqnarray}

Analytical solution of (\ref{system_for_z}) can be represented as the well-known Fer, Magnus or Dyson expansions. Like in the case of the Schr\"{o}dinger equation in \cite{iserles2019compact}   we will resort to the two leading terms of Magnus series, to which we will apply Strang splitting, followed by the compact scheme of \cite{chin2002gradient} type to obtain the {\it Ma-St-CC method} of fourth order. Indeed, the first two terms of Magnus expansion are sufficient to obtain fourth order approximations, see \cite{iserles00lgm,INR2001,blanesros}. What is more, the first two terms of Magnus expansion scale like $\mathcal{O}(h)$ and $\mathcal{O}(h^3)$ respectively, so Strang splitting applied to the truncation maintains the fourth order accuracy. As explained in \cite{iserles2019compact} compact splitting of \cite{chin2002gradient} type applied to the inner exponent  of $\mathcal{O}(h)$ order also maintains  fourth order of the entire method. 

Similar order of convergence can be expected in the case of the Klein--Gordon equation. The  highly oscillatory  input term, however, results in huge error constant, spoiling the accuracy of the method. For this reason instead of searching for fourth order convergence  (that is local error scaling like $\mathcal{O}(h^5)$) we  will  look for the accuracy depending on the relation between time step $h$, $\omega_{\min}$ and $\omega_{\max}$, obtaining local error scaling like  $\mathcal{O}(h^5+\min\{h^3,h^2/\omega_{\min},h^5\omega_{\max}^2\})$ and  like $\mathcal{O}(\min\{h^3,h^2/\omega_{\min},h^5\omega_{\max}^2\})$ in the special case $\alpha(\xx,t)\equiv 0$, see Theorem \ref{thm:errorS} of this paper.

In this Section we  present the full derivation of Ma-St-CC method tailored for the Klein--Gordon equation and  indicate the cut-off terms responsible for the growth of the error constant. Estimates of the indicated leading error terms will be presented in Section \ref{ESTIMATES}.

\subsection{Truncation of Magnus expansion}\label{MAGNUS}

The Magnus expansion \cite{magnus1954exponential} applied to  (\ref{system_for_z}) is of the form
\begin{eqnarray}\label{6}
	z(t+h)=\ee^{\Theta(t+h,t)}z(t),
\end{eqnarray}
where $\Theta(t+h,t)=\sum_{k=1}^\infty\Theta_k(t+h,t)$, and each $\Theta_k(t+h,t)$ is a linear combination of  $k$th times nested integrals of $(k-1)$-fold nested commutators,

\begin{align}\label{mag1}
	\Theta_1(t+h,t) &= \int_0^h A(  t+ t_1) \dd   t_1 \\
	\label{mag2}
	\Theta_2(t+h,t) &=- \dfrac{1}{2} \int_0^h \int_0^{  t_1} \left[ A(  t+ t_2) , A(  t+ t_1) \right] \dd   t_2 \dd   t_1  	\\
	\label{mag3}
	\Theta_3(t+h,t) &=\dfrac{1}{6} \int_0^h \int_0^{  t_1} \int_0^{  t_2}  \left[ A(  t+ t_1) , \left[ A(  t+ t_2) , A(  t+ t_3) \right] \right] \dd   t_3  \dd   t_2 \dd   t_1\\
\nonumber
	& + \dfrac{1}{6} \int_0^h \int_0^{  t_1} \int_0^{  t_2}  \left[ A(  t+ t_3) , \left[ A(  t+ t_2) , A(  t+ t_1) \right] \right] \dd   t_3  \dd   t_2 \dd   t_1 \end{align}
	\begin{align}
	\label{mag4}
	\Theta_4(t+h,t) &= \dfrac{1}{12} \int_0^h \int_0^{  t_1} \int_0^{  t_2}  \int_0^{  t_3} \left[ \left[ \left[ A(t+t_1),A(t+t_2) \right] , A(t+t_3)\right] ,A(t+t_4) \right]  \dd   t_4 \dd   t_3  \dd   t_2 \dd   t_1\\
	\nonumber
	& + \dfrac{1}{12} \int_0^h \int_0^{  t_1} \int_0^{  t_2}  \int_0^{  t_3} \left[ A(t+t_1), \left[  \left[  A(t+t_2), A(t+t_3)\right] , A(t+t_4) \right] \right]  \dd   t_4 \dd   t_3  \dd   t_2 \dd   t_1\\
	\nonumber
	& + \dfrac{1}{12} \int_0^h \int_0^{  t_1} \int_0^{  t_2}  \int_0^{  t_3} \left[ A(t+t_1), \left[ A(t+t_2),\left[ A(t+t_3) , A(t+t_4)\right] \right] \right]   \dd   t_4 \dd   t_3  \dd   t_2 \dd   t_1 \\
	\nonumber
	& + \dfrac{1}{12} \int_0^h \int_0^{  t_1} \int_0^{  t_2}  \int_0^{  t_3} \left[ A(t+t_2), \left[ A(t+t_3),\left[ A(t+t_4) , A(t+t_1)\right] \right] \right]   \dd   t_4 \dd   t_3  \dd   t_2 \dd   t_1.
\end{align}	
The Ma-St-CC method is based on the first two terms,
\begin{eqnarray}\label{4th_Magnus}
	z(t+h)\approx\exp\left(\int_0^hA(t+ t_1)\dd  t_1- \frac{1}{2} \int_0^h \int_0^{  t_1} \left[ A(  t+ t_2) , A(  t+ t_1) \right] \dd   t_2 \dd   t_1\right)z(t).
\end{eqnarray}

Expressions $\Theta_3(t+h,t) $ and $\Theta_4(t+h,t) $  constitute  principal error terms of the Ma-St-CC method. As will be explained in the end of Subsection \ref{MAGNUS_CALCULATIONS} the magnitudes of components $\Theta_k(t+h,t) $, $k\geq 5$ are of  $h^{k-4}$ times smaller than $\Theta_4(t+h,t) $. The estimates of  $\Theta_3(t+h,t) $ and $\Theta_4(t+h,t) $, showing that they scale like $\mathcal{O}(h^5+\min\{h^3,h^2/\omega_{\min},h^5\omega_{\max}^2\})$ and  like $\mathcal{O}(\min\{h^3,h^2/\omega_{\min},h^5\omega_{\max}^2\})$ in the special case $\alpha(\xx,t)\equiv 0$, will be presented in Subsection \ref{MAGNUS_CALCULATIONS}.

\subsection{The Strang splitting}\label{STRANG}
The exponent appearing on the right hand side of  (\ref{4th_Magnus}) is computationally costly. Indeed, as will be shown  in Sections \ref{outer_term} and \ref{inner_term} the integrals $\int_0^hA(t+ t_1)\dd  t_1$ and $\int_0^h \int_0^{  t_1} \left[ A(  t+ t_2) , A(  t+ t_1) \right] \dd   t_2 \dd   t_1$ differ not only in their structure (the first one is anti-diagonal, while the second is diagonal), but also in the magnitude of order in which they scale (in terms of $h$ and  $\omega_n$). We will split this exponent using  the Strang decomposition 
\begin{eqnarray}\label{Strang_general}
	\exp(X+Y)\approx\exp\!\left(\frac{1}{2}X\right)\exp(Y)\exp\!\left(\frac{1}{2}X\right),
\end{eqnarray}
whose convergence was proved, for example,  in \cite{jahnke2000error}, and whose accuracy depends on the commutators $[Y,[Y,X]]$ and $[X,[X,Y]]$.

Decomposition (\ref{Strang_general}) applied to (\ref{4th_Magnus})  results in the splitting
\begin{eqnarray}\label{4th_Strang}
	z(t+h)&\approx&\exp\!\left(-\frac{1}{4}\int_0^h \int_0^{  t_1} \left[ A(  t+ t_2) , A(  t+ t_1) \right] \dd   t_2 \dd   t_1\right)
	\exp\!\left(\int_0^hA(t+ t_1)\dd t_1\right)\\
	\nonumber
	&&\hspace*{20pt}\exp\!\left(-\frac{1}{4}\int_0^h \int_0^{  t_1} \left[ A(  t+ t_2) , A(  t+ t_1) \right] \dd   t_2 \dd   t_1\right)z(t),
\end{eqnarray}
where the outer and inner components require different numerical treatment.  Detailed estimate of the error appearing in splitting (\ref{4th_Strang}) is provided in Subsection \ref{STRANG_CALCULATIONS}.

\begin{remark}
	Approximation of a similar type
	\begin{eqnarray}\label{symmetric_Fer}
		z(t+h)&\approx&
		\exp\!\left(\frac{1}{2}\int_0^hA(t+ t_1)\dd t_1\right)
		\exp\!\left(-\frac{1}{2}\int_0^h \int_0^{  t_1} \left[ A(  t+ t_2) , A(  t+ t_1) \right] \dd   t_2 \dd   t_1\right)\\ \nonumber
		&&\hspace*{20pt}\exp\!\left(\frac{1}{2}\int_0^hA(t+ t_1)\dd t_1\right)z(t),
	\end{eqnarray}
	can be also obtained directly from the symmetric Fer expansion, which has been proposed in \cite{zanna2001fer} and further analysed and improved in \cite{ blanes2002high}. The difference between (\ref{4th_Strang}) and (\ref{symmetric_Fer}) is that the latter requires two evaluations of $\exp\!\left(\frac{1}{2}\int_0^hA(t+ t_1)\dd t_1\right)$ which, as will be obvious from the next two subsections, calls for more attention and is computationally more costly. 
\end{remark}

\subsection{Numerical treatment of the outer term}\label{outer_term}

The argument of the outer term in (\ref{4th_Strang}) reduces to a diagonal matrix
\begin{align*}
	\int_0^h \int_0^{t_1}& [A(t+ t_2),A(t+ t_1)]\dd t_2\dd t_1 \\
	\nonumber
	& = \int_0^h \int_0^{t_1}\left[ \left[  \begin{array}{cc}
		0 & 1 \\
		N(t+ t_2) & 0 
	\end{array} \right],
	\left[  \begin{array}{cc}
		0 & 1 \\
		N(t+ t_1) & 0 
	\end{array} \right]
	\right]\dd t_2\dd t_1\\
	\nonumber
	& = \int_0^h \int_0^{t_1}\left[  
	\begin{array}{cc}
		N(t+ t_1)-N(t+ t_2) & 0 \\
		0 & N(t+ t_2)-N(t+ t_1)
	\end{array}
	\right]\dd t_2\dd t_1\\
	\nonumber
	&=\left[  
	\begin{array}{cc}
		\int_0^h \int_0^{t_1}\left(f(t+ t_1)-f(t+ t_2)\right)\dd t_2\dd t_1 & 0 \\
		0 & \int_0^h \int_0^{t_1}\left(f(t+ t_2)-f(t+ t_1)\right)\dd t_2\dd t_1
	\end{array}
	\right],
\end{align*}
whose exponent can be computed numerically with low computational cost.
\begin{remark}
	By simple integration by parts we can observe, that the problem of two-dimensional quadratures boils down to the far less computationally costly one-dimensional quadrature:
	$$
	\int_0^h \int_0^{t_1} \left( f(t+ t_1)-f( t+t_2) \right)\dd t_2\dd t_1=2\int_0^h( t_1-\frac{h}{2})f(t+ t_1)\dd t_1.
	$$
\end{remark}

\begin{remark}
	We note that after semi-discretization, ${\xx}=(x^1_1,\ldots,x^1_M)\otimes\ldots\otimes(x^d_1,\ldots,x^d_M)$, the  integral under consideration, $\int_0^h \int_0^{t_1}[A(t+ t_2),A(t+ t_1)]\dd t_2\dd t_1$, can be approximated by a diagonal matrix, as a tensor product of diagonal matrices 
	\begin{eqnarray*}
		&&\left[  
		\begin{array}{cccccc}
			I(x^1_1,t) & \ldots & 0 & 0 & \ldots & 0 \\
			\vdots & \ddots & \vdots &   \vdots & \ddots & \vdots \\
			0 & \ldots& I(x^1_M,t) & 0 & \ldots& 0 \\
			0 & \ldots & 0 &  -I(x^1_1,t) & \ldots & 0 \\
			\vdots & \ddots & \vdots &  \vdots & \ddots & \vdots  \\
			0 & \ldots& 0 & 0 & \ldots& -I(x^1_M,t)
		\end{array}
		\right]\\
		&&\hspace*{20pt}\otimes\ldots\otimes
		\left[  
		\begin{array}{cccccc}
			I(x^d_1,t) & \ldots & 0 & 0 & \ldots & 0 \\
			\vdots & \ddots & \vdots &   \vdots & \ddots & \vdots \\
			0 & \ldots& I(x^d_M,t) & 0 & \ldots& 0 \\
			0 & \ldots & 0 &  -I(x^d_1,t) & \ldots & 0 \\
			\vdots & \ddots & \vdots &  \vdots & \ddots & \vdots  \\
			0 & \ldots& 0 & 0 & \ldots& -I(x^d_M,t)
		\end{array}
		\right],
	\end{eqnarray*}
	where
	$$
	I(x^k_i,t)=2 \int_0^h(t_1-\frac{h}{2})f(x^k_i,t+ t_1) \dd t_1,\quad k=1,\ldots,d, i=1,\ldots,M.
	$$
	Obviously taking the exponential of a diagonal matrix is computationally straightforward.
\end{remark}

\subsection{Numerical treatment of the inner term}\label{inner_term}

In this subsection we will tackle the inner term
\begin{eqnarray}\label{inner}
	\exp\!\left(\int_0^hA(t+ t_1)\dd t_1\right)=
	\exp\!\left( \left[ \begin{array}{cc}
		0 &  h \\
		D+ F 
		& 0 
	\end{array}\right] \right)\!,
\end{eqnarray}
where for sake of clarity we denote $D:=h\Delta$ and $F:=   \int_0^h f(t+ t_1)\dd t_1$.
Exponent of the anti-diagonal matrix (\ref{inner}) can be computed in terms of hyperbolic functions,
$$
\exp\!\left(
\left[  \begin{array}{cc}
	0 & h \\
	D+F & 0 
\end{array} \right]\right)=
\left[
\begin{array}{cc}
	\cosh \left(\sqrt{h(D+F)}\right) & \displaystyle \sqrt{\frac{h}{D+F}}  \sinh \left(\sqrt{h(D+F)}\right) \\ 
	\displaystyle \sqrt{\frac{D+F}{h}} \sinh \left(\sqrt{h(D+F)}\right) & \cosh \left(\sqrt{h(D+F)}\right) \\
\end{array}
\right]\!.
$$

The computation of hyperbolic sine or cosine of $\sqrt{h(D+F)}$ might be complicated and costly, because it is neither diagonal nor circulant symmetric.
Our aim is thus to decompose the matrix
$\left[  \begin{array}{cc}
	0 & h \\
	D+F & 0 
\end{array} \right]$
so that each of the split components can be exponentiated with low computational cost. We will use  fourth order  compact splittings of \cite{chin2002gradient} type tailored for our case,

\begin{eqnarray}\label{Splitting4a}
	\ee^{hX+hY}&\approx&\ee^{\frac16 hX}\ee^{\frac12 hY}\ee^{\frac23 hX+\frac{1}{72}h^3[X,[Y,X]]}\ee^{\frac{1}{2}hY}\ee^{\frac{1}{6}hX},
\end{eqnarray}
where the order of convergence is governed by the leading truncated terms of the symmetric Baker--Campbell-Hausdorff formula,  the fourfold nested commutators $[Y,[Y,[Y,[Y,X]]]]$, $[X,[X,[X,[X,Y]]]]$, $[X,[Y,[Y,[Y,X]]]] $, $ [Y,[X,[X,[X,Y]]]]$, $[Y,[X,[Y,[X,Y]]]]$ and $[X,[Y,[X,[Y,X]]]]$, which will be discussed in Subsection \ref{CC_CALCULATIONS}). 

\subsubsection{The inner term - towards the scheme $\Gamma^{[4]}_1$}\label{inner_term_a}

In our first  compact splitting we separate  the Laplacian  
$\left[  \begin{array}{cc}
	0 & 0 \\
	D & 0 
\end{array} \right]$
from the potential part
$
\left[  \begin{array}{cc}
	0 & h \\
	F & 0 
\end{array} \right]
$
and apply (\ref{Splitting4a}) concluding with the following splitting of order four,
\begin{eqnarray*}
	&&\exp\!\left(
	\left[  \begin{array}{cc}
		0 & h \\
		D + F & 0 
	\end{array} \right]\right)=
	\exp\!\left(
	\left[  \begin{array}{cc}
		0 & 0 \\
		D & 0 
	\end{array} \right] +
	\left[  \begin{array}{cc}
		0 & h \\
		F & 0 
	\end{array} \right]
	\right)\\
	&=&
	\exp\!\left(
	\left[  \begin{array}{cc}
		0 & 0 \\
		\frac{1}{6}D & 0 
	\end{array} \right]\right)
	\exp\!
	\left(\left[ \begin{array}{cc}
		0 & \frac{1}{2}h \\
		\frac{1}{2}F& 0 
	\end{array} \right]\right)
	\exp\!\left(
	\left[  \begin{array}{cc}
		0 & 0 \\
		\frac{2}{3}D+\frac{2}{72}hD^2 & 0 
	\end{array} \right]\right)
	\exp\!\left(
	\left[  \begin{array}{cc}
		0 & \frac{1}{2}h \\
		\frac{1}{2}F & 0 
	\end{array} \right]\right)\\
	&&\hspace*{20pt}\mbox{}\times
	\exp\!\left(
	\left[  \begin{array}{cc}
		0 & 0 \\
		\frac{1}{6}D & 0 
	\end{array} \right]\right)+\mathcal{O}(h^5).
\end{eqnarray*}
We observe, that hyperbolic functions of $\sqrt{hF}/2$ appearing in
\begin{eqnarray*}
	\exp\!\left(
	\left[  \begin{array}{cc}
		0 & \frac{1}{2}h \\
		\frac{1}{2}F & 0 
	\end{array} \right]\right)&=&
	\left[
	\begin{array}{cc}
		\displaystyle\cosh\! \left(\frac{\sqrt{hF}}{2}\right) & \displaystyle \sqrt{\frac{h}{F}} \sinh\! \left(\frac{\sqrt{hF}}{2}\right) \\ 
		\displaystyle\sqrt{\frac{F}{h}} \sinh\! \left(\frac{\sqrt{hF}}{2}\right)& \displaystyle\cosh\! \left(\frac{ \sqrt{hF}}{2}\right) \\
	\end{array}
	\right]\\
	&=&\left[
	\begin{array}{cc}
		\displaystyle\cosh\! \left(\frac{\sqrt{hF}}{2}\right) & \displaystyle\frac{h}{2} \mathrm{sinc}\! \left(\frac{\sqrt{hF}}{2}\right) \\ 
		\displaystyle\sqrt{\frac{F}{h}} \sinh\! \left(\frac{ \sqrt{hF}}{2}\right) & \displaystyle\cosh\! \left(\frac{ \sqrt{hF}}{2}\right) \\
	\end{array}
	\right] 
\end{eqnarray*}
can be computed easily because $\frac{\sqrt{hF}}{2}$ becomes a diagonal matrix following semidiscretization. Also the other two matrices can be exponentiated straightforwardly, since
$$
\exp\!\left(
\left[  \begin{array}{cc}
	0 & 0 \\
	\frac{1}{6}D & 0 
\end{array} \right]\right)
=
\left[  \begin{array}{cc}
	1 & 0 \\
	\frac{1}{6}D & 1 
\end{array} \right]
\quad {\rm and} \quad
\exp\!\left(
\left[  \begin{array}{cc}
	0 & 0 \\
	\frac{2}{3}D+\frac{2}{72}hD^2 & 0 
\end{array} \right]\right)
=
\left[  \begin{array}{cc}
	1 & 0 \\
	\frac{2}{3}D+\frac{2}{72}hD^2 & 1 
\end{array} \right]\!.
$$

\subsubsection{Inner term - towards the scheme $\Gamma^{[4]}_2$}\label{inner_term_b}

An alternative splitting may be obtained by keeping the kinetic and potential parts together, for which the  sum $D+F$ is the only nonzero entry of the matrix. After applying the splitting (\ref{Splitting4a}) we need to exponentiate matrices with  only one nonzero input,
\begin{eqnarray*}
	\exp\!\left(
	\left[  \begin{array}{cc}
		0 & h \\
		D + F & 0 
	\end{array} \right]\right)&=&
	\exp\!\left(
	\left[  \begin{array}{cc}
		0 & 0 \\
		D+F & 0 
	\end{array} \right]+
	\left[  \begin{array}{cc}
		0 & h \\
		0 & 0 
	\end{array} \right]\right)\\
	&=&
	\exp\!\left(
	\left[  \begin{array}{cc}
		0 & 0 \\
		\frac{1}{6}(D+F) & 0 
	\end{array} \right]\right)
	\exp\!\left(
	\left[  \begin{array}{cc}
		0 & \frac{1}{2}h \\
		0& 0 
	\end{array} \right]\right)\\
	&&\hspace*{20pt}\mbox{}\times
	\exp\!\left(
	\left[  \begin{array}{cc}
		0 & 0 \\
		\frac{2}{3}(D+F)+\frac{2}{72}h(D+F)^2 & 0 
	\end{array} \right]\right)
	\exp\!\left(
	\left[  \begin{array}{cc}
		0 & \frac{1}{2}h \\
		0& 0 
	\end{array} \right]\right)\\
	&&\hspace*{20pt}\mbox{}\times
	\exp\!\left(
	\left[  \begin{array}{cc}
		0 & 0 \\
		\frac{1}{6}(D+F) & 0 
	\end{array} \right]\right)+\mathcal{O}(h^5),
\end{eqnarray*}
which makes the scheme computationally extremely fast.

\subsection{Complete numerical schemes  $\Gamma_1^{[4]}$ and  $\Gamma^{[4]}_2$}

Now, having taken care of the numerical treatment of the outer term in Section \ref{outer_term} and inner term in Subsections \ref{inner_term_a} and \ref{inner_term_b}, we are ready to build up upon the local approximation presented in (\ref{4th_Strang}). Adopting  simplifications in the notation introduced earlier, we define
$$
D:=h\Delta ,\quad {\rm }\quad F_k:=   \int_0^h f(t_k+ t_1)\dd t_1\quad {\rm and}\ \quad   \mathcal{F}_k := \dfrac{1}{2} \int_0^h(t_1-\frac{h}{2})f(t_k+ t_1)\dd t_1.
$$
Assuming that the solution $ z(t_k) $ is known at $ t_k = kh, $ the scheme $\Gamma^{[4]}_1$ reads
\begin{eqnarray*}
	z(t_k+h) &\!\!\! \approx\!\!\! & \left[  \begin{array}{cc}
		\exp\left( -\mathcal{F}_k \right)  & 0 \\
		0 & \exp \left( \mathcal{F}_k \right) 
	\end{array} \right]\! \left[  \begin{array}{cc}
		1 & 0 \\
		\frac{1}{6}D & 1 
	\end{array} \right]\! \left[
	\begin{array}{cc}
		\displaystyle \cosh\! \left(\frac{\sqrt{hF_k }}{2}\right) & \displaystyle\frac{h}{2} \rm{sinc} \!\left(\frac{\sqrt{hF_k }}{2}\right) \\ 
		\displaystyle\sqrt{\frac{F_k}{h} } \sinh\! \left(\frac{ \sqrt{hF_k }}{2}\right)& \displaystyle\cosh\! \left(\frac{ \sqrt{hF_k }}{2}\right) 
	\end{array}
	\right] \!\\
	&&\hspace*{15pt}\mbox{}\times \left[  \begin{array}{cc}
		1  & 0 \\
		\frac{2}{3}D+\frac{2}{72}hD^2 & 1  
	\end{array} \right]\!\left[
	\begin{array}{cc}
		\displaystyle\cosh \left(\frac{\sqrt{hF_k }}{2}\right) & \displaystyle\frac{h}{2} \rm{sinc} \left(\frac{\sqrt{hF_k }}{2}\right) \\ 
		\displaystyle\sqrt{\frac{F_k }{h}} \sinh \left(\frac{ \sqrt{hF_k }}{2}\right) & \displaystyle\cosh \left(\frac{ \sqrt{hF_k }}{2}\right) \\
	\end{array}
	\right] \\ 
	&&\hspace*{15pt}\mbox{}\times  
	\left[  \begin{array}{cc}
		1 & 0 \\
		\frac{1}{6}D & 1 
	\end{array} \right]\!\left[  \begin{array}{cc}
		\exp\left( -\mathcal{F}_k  \right)  & 0 \\
		0 & \exp \left( \mathcal{F}_k \right) 
	\end{array} \right]z(t_k).
\end{eqnarray*}

The algorithm for the scheme $ \Gamma^{[4]}_1 $ is presented in Table \ref{TABLE_1}.

\begin{center} \colorbox{light-gray}{
		\begin{tabular}{l}
			$ T $ time steps of 4th order algorithm 	$ \Gamma^{[4]}_1 $\\ 
			\hline \\
			\textbf{do }$  k=0,T-1 $ \\ \smallskip
			$\qquad q_0 = \exp(- \mathcal{F}_k)  z_1  ; \quad  p_0 =\exp( \mathcal{F}_k)  z_2$ \\ \smallskip
			$\qquad \qquad p_1 = \frac{1}{6} D q_0 + p_0 $ \\ \smallskip
			$\qquad \qquad q_1 = \cosh \!\left(\frac{ \sqrt{h F_k}}{2}\right)  q_0 +\frac{h}{2}\rm{sinhc} \!\left(\frac{\sqrt{hF_k }}{2}\right) p_1 $ \\ \smallskip
			$\qquad \qquad \qquad p_2 =\sqrt{\frac{F_k}{h}}  \sinh \!\left(\frac{ \sqrt{h F_k}}{2}\right) q_0 + \cosh\! \left(\frac{ \sqrt{h F_k}}{2}\right) p_1$\\ \smallskip
			$ \qquad \qquad \qquad \qquad p_3 = \left(  \frac{2}{3}D+\frac{2}{72}hD^2\right) q_1 + p_2  $\\ \smallskip
			$\qquad \qquad \qquad q_2 = \cosh\! \left(\frac{ \sqrt{h F_k}}{2}\right)  q_1 +  \frac{h}{2}\rm{sinhc}\! \left(\frac{\sqrt{hF_k }}{2}\right)   p_3 $ \\ \smallskip
			$ \qquad\qquad p_4 = 
			\sqrt{\frac{F_k}{h}} \sinh\! \left(\frac{ \sqrt{h F_k}}{2}\right) q_1+ \cosh\! \left(\frac{ \sqrt{h F_k}}{2}\right) p_3$\\ \smallskip
			$\qquad \qquad p_5 = \frac{1}{6} D q_2 + p_4 $ \\ \smallskip
			$\qquad \qquad q_3 = \exp(- \mathcal{F}_k) q_2  ;\quad  p_6 =\exp( \mathcal{F}_k) p_5$ \\ \smallskip
			$\qquad z_1 := q_3 ; \quad z_2 := p_6  $\\
			\textbf{end do} \\
			\hline 
	\end{tabular}}
	\captionof{table}{The algorithm $ \Gamma^{[4]}_1 $ for finding the approximate solution on the time interval $[t_0,t_T]$ with $T$ time steps $h=(t_T-t_0)/T$. Note that $z(t)=[z_1(t),z_2(t)]^\top$, where  the discretisation in space is neither applied nor, indeed,  specified.}
	\label{TABLE_1}
\end{center}

In a similar vain we present final scheme $\Gamma^{[4]}_2$, and its algorithm is presented in Table \ref{TABLE_2}.
\begin{eqnarray*}
	z(t_k+h) &\!\!\! \approx\!\!\! & \left[  \begin{array}{cc}
		\exp\left(- \mathcal{F}_k \right)  & 0 \\
		0 & \exp \left( \mathcal{F}_k \right) 
	\end{array} \right] \!
	\left[  \begin{array}{cc}
		1 & 0 \\
		\frac{1}{6}(D+F_k) & 1
	\end{array} \right]\!
	\left[  \begin{array}{cc}
		1 & \frac{1}{2}h \\
		0& 1
	\end{array} \right]\\
	&&\mbox{}\times 
	\left[  \begin{array}{cc}
		1 & 0 \\
		\frac{2}{3}(D+F_k)+\frac{2}{72}h(D+F_k)^2 & 1
	\end{array} \right] \\
	&&\mbox{}\times \left[  \begin{array}{cc}
		1 & \frac{1}{2}h \\
		0& 1
	\end{array} \right]\!
	\left[  \begin{array}{cc}
		1 & 0 \\
		\frac{1}{6}(D+F_k) & 1
	\end{array} \right] \left[  \begin{array}{cc}
		\exp\left( - \mathcal{F}_k \right)  & 0 \\
		0 & \exp \left(\mathcal{F}_k \right) 
	\end{array} \right] \!z(t_k).
\end{eqnarray*} 

\begin{center} \colorbox{light-gray}{
		\begin{tabular}{l}
			$ T $ time steps of 4th order algorithm 	$ \Gamma^{[4]}_2 $\\ 
			\hline \\
			\textbf{do }$  k=0,T-1 $ \\ \smallskip
			$\qquad q_0 = \exp(-\mathcal{F}_k)  z_1  ; \quad  p_0 =\exp( \mathcal{F}_k)  z_2$ \\ \smallskip
			$  \qquad p_1 = \frac{1}{6} (D+F_k) q_0 + p_0 $ \\ \smallskip
			$ \qquad \qquad q_1 = q_0 +\frac{1}{2} h p_1 $ \\ \smallskip
			$ \qquad \qquad \qquad p_2 = \left( \frac{2}{3}(D+F_k)+\frac{2}{72}h(D+F_k)^2 \right) q_1+p_1; $ \\ \smallskip
			
			$ \qquad \qquad q_2 = q_1+\frac{1}{2} h p_2 $ \\ \smallskip
			
			$ \qquad \qquad p_3= \frac{1}{6} (D+F_k) q_2 + p_2 $ \\ \smallskip
			$ \qquad q_3 = \exp(-\mathcal{F}_k)  q_2  ; \quad  p_4 =\exp( \mathcal{F}_k) p_3 $ \\ \smallskip
			
			$\qquad z_1 := q_3 ; \quad z_2 := p_4  $\\ \smallskip
			\textbf{end do} \\
			\hline 
	\end{tabular}}
	\captionof{table}{Algorithm $ \Gamma^{[4]}_2 $ for finding the approximate solution on the time interval $[t_0,t_T]$ with $T$ time steps $h=(t_T-t_0)/T$. Note that $z(t)=[z_1(t),z_2(t)]^\top$, where  the discretisation in space is  neither applied nor specified.}
	\label{TABLE_2}
\end{center}

\section{An estimate of  leading error terms}\label{ESTIMATES}

In the previous section we presented the full derivation of the schemes arguing that all leading error terms scale like $\mathcal{O}(h^5)$. We mentioned, however, that some of the principal error terms of Magnus expansions and of Strang splitting may depend on the oscillatory coefficients $\omega_n$. In this Section we provide careful estimates of these terms. The following theorem will be frequently exploited in these estimates.

\begin{theorem}\label{thm1}
	Let $ a \in C^1[0,h] $ be a real function,  $ h \leq 1  $ and $ \omega \geq 1 $. 	Then the following estimate holds:
	\begin{eqnarray}\label{est_thm}
		\left|\int_0^h \int_0^{t_1} \int_0^{t_2} .... \int_0^{t_{m-1} } a(t_k) \ee^{\ii  \omega t_k }  \dd t_m \dd t_{m-1} ... dt_1  \right| \leq C \: \mathbb{A}  \min\! \left\lbrace h^{m}, \dfrac{h^{m-1}}{\omega} \right\rbrace , \quad 1 \leq k\leq m,
	\end{eqnarray}
	where $C$ is a constant and  	$ \mathbb{A}  = \max_{t \in [0,h]} \left\lbrace |a(t)|, |a'(t)|\right\rbrace $.
\end{theorem}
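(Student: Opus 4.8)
The plan is to collapse the $m$-fold iterated integral over the simplex $0\le t_m\le\cdots\le t_1\le h$ into a single one-dimensional integral in the variable $t_k$, and then to establish the two bounds in the minimum by independent arguments: the $h^m$ bound by a crude modulus estimate, and the $h^{m-1}/\omega$ bound by one integration by parts, which is the standard mechanism by which an oscillatory factor $\ee^{\ii\omega s}$ buys a power of $1/\omega$.

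First I would integrate out the $m-1$ variables on which the integrand does not depend. Since the domain is the ordered simplex and the integrand is a function of $t_k$ alone, the inner variables $t_{k+1},\dots,t_m$ contribute the factor $t_k^{m-k}/(m-k)!$ and the outer variables $t_1,\dots,t_{k-1}$ contribute $(h-t_k)^{k-1}/(k-1)!$. Writing $s$ for $t_k$, this yields
\[
  \int_0^h\!\!\cdots\!\!\int_0^{t_{m-1}} a(t_k)\,\ee^{\ii\omega t_k}\,\dd t_m\cdots\dd t_1
  \;=\; \frac{1}{(k-1)!\,(m-k)!}\int_0^h s^{m-k}(h-s)^{k-1}\,a(s)\,\ee^{\ii\omega s}\,\dd s .
\]
Everything is thus reduced to estimating $\int_0^h P(s)\,a(s)\,\ee^{\ii\omega s}\,\dd s$ for the explicit polynomial weight $P(s)=s^{m-k}(h-s)^{k-1}$, which satisfies $0\le P(s)\le h^{m-1}$ on $[0,h]$.

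For the first bound I would use $|a(s)\ee^{\ii\omega s}|\le\mathbb{A}$ together with $P(s)\le h^{m-1}$, so that the integral is at most $\mathbb{A}\,h^m$ (evaluating the Beta integral exactly even gives the sharper $\mathbb{A}\,h^m/m!$). For the second bound I would integrate by parts with $\dd v=\ee^{\ii\omega s}\,\dd s$:
\[
  \int_0^h P(s)a(s)\ee^{\ii\omega s}\,\dd s
  \;=\; \frac{1}{\ii\omega}\Bigl[P(s)a(s)\ee^{\ii\omega s}\Bigr]_0^h
     \;-\;\frac{1}{\ii\omega}\int_0^h \bigl(P(s)a(s)\bigr)'\,\ee^{\ii\omega s}\,\dd s .
\]
The boundary term vanishes at $s=h$ unless $k=1$ and at $s=0$ unless $k=m$; in those exceptional cases $P$ takes the value $h^{m-1}$ at the relevant endpoint, so the boundary contribution is bounded by $2h^{m-1}\mathbb{A}/\omega$. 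For the remaining integral I would write $|(Pa)'|\le(|P'|+|P|)\,\mathbb{A}$ and use the elementary bounds $|P(s)|\le h^{m-1}$ and $|P'(s)|\le(m-1)h^{m-2}$ on $[0,h]$ — the formally negative exponents appearing in $P'$ are harmless, since the offending monomial carries the coefficient $m-k$ (resp.\ $k-1$), which vanishes precisely when $m-k-1<0$ (resp.\ $k-2<0$). This gives $\omega^{-1}\int_0^h|(Pa)'|\,\dd s\le m\,\mathbb{A}\,h^{m-1}/\omega$ after using $h\le 1$. Combining the boundary and integral contributions and accounting for the prefactor $1/\bigl((k-1)!(m-k)!\bigr)\le 1$, the reduced integral is bounded by $C_m\,\mathbb{A}\,h^{m-1}/\omega$ with, say, $C_m=m+2$; taking the minimum of the two bounds completes the proof.

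I do not anticipate a genuine obstacle: the only points needing a little care are the case distinction $k\in\{1,m\}$ versus $2\le k\le m-1$ when bounding the boundary term, and the verification that the negative powers in $P'$ never truly occur. The hypothesis $h\le 1$ is used only to absorb a superfluous power of $h$, and $\omega\ge 1$ is not required here beyond being the standing assumption; the constant $C$ in the statement depends only on $m$, which is fixed and small in every application of the theorem.
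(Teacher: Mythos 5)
Your proof is correct and rests on the same mechanism as the paper's: reduce to a one-dimensional oscillatory integral with a polynomial weight and perform a single integration by parts to trade one power of $h$ for a factor $1/\omega$. The only cosmetic difference is that you collapse all $m-1$ inert variables at once into the Beta-type weight $s^{m-k}(h-s)^{k-1}$, whereas the paper proves the key bound for the innermost integral with monomial weight $t_m^r$ and leaves the remaining outer integrations (which supply the factor $h^{k-1}$) implicit.
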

\begin{proof}
	Let us understand the $C$ is a generic constant and start with an observation that for every $r=0,1,2,\ldots$
	\begin{eqnarray}\label{est}
		\left| \int_0^{t_{m-1}} t^r_m a(t_m) \ee^{\ii  \omega t_m}  \dd t_m \right| & \leq &(r+2) C \mathbb{A}  \min \!\left\lbrace h^{r+1},\dfrac{h^r}{\omega} \right\rbrace\!, \quad 0 \leq  t_{m} < t_{m-1} \leq h.
	\end{eqnarray}
	Indeed,  an immediate estimate is
	\begin{eqnarray}\label{est_1}
		\left| \int_0^{t_{m-1}} t^r_m a(t_m) \ee^{\ii  \omega t_m}  \dd t_m \right| & \leq &C \mathbb{A}  h^{r+1},
	\end{eqnarray}
	but by simple integration by parts we observe that for $r=0$
	\begin{eqnarray}
		\nonumber
		\int_0^{t_{m-1}}  a(t_m) \ee^{\ii  \omega t_m}  \dd t_m & =& \dfrac{1}{\ii \omega}  a(t_{m-1})\ee^{\ii \omega t_{m-1}}  - \frac{1}{\ii \omega}a(0) - \dfrac{1}{\ii \omega} \int_0^{t_{m-1}}  a'(t_m) \ee^{\ii  \omega t_m}  \dd t_m, 
	\end{eqnarray}
	and that for $r=1,2,\ldots$ 
	\begin{eqnarray}
		\nonumber
		\int_0^{t_{m-1}} t^r_m a(t_m) \ee^{\ii  \omega t_m}  \dd t_m & =& \dfrac{1}{\ii \omega} t_{m-1}^{r} a(t_{m-1})\ee^{\ii \omega t_{m-1}} - \dfrac{1}{\ii \omega} \int_0^{t_{m-1}} r t_m^{r-1} a(t_m) \ee^{\ii  \omega t_m}  \dd t_m \\
		&-& \dfrac{1}{\ii \omega} \int_0^{t_{m-1}} t_m^r a'(t_m) \ee^{\ii  \omega t_m}  \dd t_m 
	\end{eqnarray}	
	and can obtain the more subtle result,
	\begin{eqnarray}\label{est_2}
		\left| \int_0^{t_{m-1}} t^r_m a(t_m) \ee^{\ii  \omega t_m}  \dd t_m \right| & \leq & \dfrac{h^r}{\omega} C\mathbb{A}  +r \dfrac{h^r}{\omega} C \mathbb{A} +  \dfrac{h^{r+1}}{\omega} C\mathbb{A}, \quad r=0,1,2,\ldots
	\end{eqnarray}
	Combining (\ref{est_1}) and (\ref{est_2}) we  conclude with the inequality (\ref{est}). Finally, inequality (\ref{est_thm}) is an  immediate consequence of (\ref{est}).
\end{proof}

\begin{remark}\label{sum_over_n}
	Let $ a_n \in C^1[0,h], \; n < |N|, N \in \mathbb{N} $ be a family of known,  real-valued functions and let   $ h \leq 1  $ and $  \omega_{\min}  \geq 1 $. 	Then we can show a result similar to the one obtained in Theorem \ref{thm1}, namely, that
	\begin{eqnarray}\label{est_remark}
		\left|\int_0^h \int_0^{t_1} \int_0^{t_2} .... \int_0^{t_{m-1} } \sum_{|n|\leq N} a_n(t_k) \ee^{\ii   \omega_n t_k }  \dd t_m \dd t_{m-1} ... dt_1  \right| \leq C \: \mathbb{A}  \min \!\left\lbrace h^{m}, \dfrac{h^{m-1}}{ \omega_{\min}} \right\rbrace\! , \quad 1 \leq k\leq m,
	\end{eqnarray}
	where $C$ is a generic constant and  	$ \mathbb{A}  = \max_{t \in [0,h]} \left\lbrace |a_n(t|, |a_n'(t)|; |n|<N \right\rbrace $.
\end{remark}

\begin{remark}
	For   clarity of exposition we will abuse the notation and write 
	\begin{eqnarray}\label{high_oscyl_1}
		\nonumber
		f(t_k) = \alpha(t_k) + \sum_{|n|\leq N} a_n(t_k) \ee^{i  \omega_n  t_k}
	\end{eqnarray}
	instead of 
	\begin{eqnarray}
		\nonumber
		f(t+t_k)   = \alpha(t+t_k) + \sum_{|n|\leq N} a_n (t+t_k) \ee^{i  \omega_n  (t+t_k)}.
	\end{eqnarray} 
\end{remark}

In the remainder of the paper we denote by $\Vert \cdot\Vert_s$ the standard $H^s$ Sobolev norm on the torus $\mathbb{T}^d$ and assume that that the following assumption holds.

\begin{assumption}\label{MainAssumption}
	Seeking solutions $\psi(t):=\psi(\cdot,t) \in C^2([0,T],H^{s+6}(\mathbb{T}^d))$, we assume that 
	\begin{enumerate}
		\item
		$a_n(t):=a_n(\cdot,t) \in C^2([0,T],H^{s+4}(\mathbb{T}^d))$,
		\item
		$\alpha(t):=\alpha(\cdot,t) \in C^2([0,T],H^{s+4}(\mathbb{T}^d))$,
		\item
		$s\geq0$, $s+6>d/2$.
	\end{enumerate}
\end{assumption}
\begin{definition}\label{errors_def}
	In all calculations of this article we denote by $ C $ a generic constant. Furthermore, we let
	\begin{align*}
		\error = & \min \left\lbrace h^{3},\dfrac{h^2}{ \omega_{\min}},  h^5\omega^2_{\max}  \right\rbrace\!, \\
		\widetilde{ \mathcal{A}} = & \max_{t \in  [0,T]} \left\lbrace \left\| a_n(t) \right\|_{s+4},\left\|a_n^2(t) \right\|_{s+2},\left\|a_n^3(t) \right\|_{s}, \left\| \partial_t a_n(t) \right\|_{s+2}  , \left\| \partial_t^2 a_n(t) \right\|_{s+2};\ \vert n \vert \leq N\right\rbrace\!,  \\ 
		\nonumber
		\widetilde{ \mathcal{L}} = & \max_{t \in  [0,T]} \left\lbrace \left\| \alpha(t) \right\|_{s+4},\left\|\alpha^2(t) \right\|_{s+2},\left\|\alpha^3(t) \right\|_{s}, \left\| \partial_t \alpha(t) \right\|_{s+2}  , \left\| \partial_t^2 \alpha(t) \right\|_{s+2}\right\rbrace\! .
	\end{align*}
\end{definition}

\subsection{Leading error terms of Magnus expansion truncation}\label{MAGNUS_CALCULATIONS}

The commutators appearing in (\ref{mag3})--(\ref{mag4}) are calculated  in Appendix \ref{apA}. We commence with
\begin{eqnarray}
	\nonumber
	\Theta_3(t+h,t) & =&\dfrac{1}{6} \int_0^h \int_0^{  t_1} \int_0^{  t_2}  \left[  \begin{array}{cc}
		0 & H_1+H_3 \\
		H_2+H_4 & 0
	\end{array} \right] \dd   t_3  \dd   t_2 \dd   t_1,
\end{eqnarray}
where 
\begin{eqnarray}
	H_1 + H_3 & =& 2 \left[ - f(  t_1) + 2f(  t_2) - f(  t_3) \right];  \\
	\nonumber
	H_2 + H_4 & =&  \Delta  \left[ f(t_3) -2 f(t_2)+ f(t_1)\right] + \left[ f(t_3) -2 f(t_2)+ f(t_1)\right] \Delta  \\
	\nonumber
	&& \mbox{}+4 f(t_1) f(t_3) - 2 f(t_1) f(t_2)   - 2 f(t_3) f(t_2).
\end{eqnarray} 

In the estimates below  we separate the non-oscillatory  from the oscillatory parts and obtain
\begin{align}\nonumber
	\left| 	\int_0^h \int_0^{  t_1} \int_0^{  t_2} \right. &\left. \left( H_1 + H_3 \right)  \dd   t_3  \dd   t_2 \dd   t_1 \right|  \leq   \left| \int_0^h \int_0^{  t_1} \int_0^{  t_2} 2 \left[ - \alpha(t_1) + 2\alpha (t_2) - \alpha (t_3) \right]  \dd   t_3  \dd   t_2 \dd   t_1 \right| \\
	\nonumber
	&\mbox{}+ \sum_{|n|\leq N}  \left| \int_0^h \int_0^{  t_1} \int_0^{  t_2} 2 \left[ - a_n(t_1)\ee^{i \omega_n t_1} + 2a_n(t_2) \ee^{i \omega_n t_2} - a_n(t_3)\ee^{i \omega_n t_3} \right]  \dd   t_3  \dd   t_2 \dd   t_1 \right| \\
	\label{h1h3}
	&\leq Ch^5  \max_{t \in [0,h]} | \alpha''(t)|  + C \: \widetilde{ \mathcal{A}} \min \left\lbrace h^{3},\dfrac{h^2}{ \omega_{\min}},  h^5\omega^2_{\max}  \right\rbrace\\
	\nonumber
	&\leq  Ch^5 \widetilde{ \mathcal{L}} + C \: \widetilde{ \mathcal{A}}  \error . 
\end{align}

The estimate of the first summand may be obtained by  expanding $ \alpha(t_i), i = 1,2,3 $ into a Taylor series at the origin: there exist such $\xi_1, \xi_2, \xi_3 \in [0,h]$ that
\begin{align*}
	\bigg\vert	\int_0^h \int_0^{  t_1} \int_0^{  t_2}  &	 \left[  \alpha(  t_1) - 2\alpha(  t_2) + \alpha(  t_3) \right]   \dd   t_3  \dd   t_2 \dd   t_1\bigg\vert  \\
	&= \left\vert  \int_0^h \int_0^{  t_1} \int_0^{  t_2} 	\alpha'(0)(t_1-2t_2+t_3)\:   \dd   t_3  \dd   t_2 \dd   t_1 + \int_0^h \int_0^{  t_1} \int_0^{  t_2} 	t_1^2\alpha''(\xi_1) -2t_2^2\alpha''(\xi_2)+t_3^2\alpha''(\xi_3)\:    \dd   t_3  \dd   t_2 \dd   t_1  \right\vert \\
	& \leq 4 h^5 \max_{t \in [0,T]} | \alpha''(t)|,
\end{align*}  
because the first triple integral vanishes. 

More attention should be paid to the case, where Laplace operator $\Delta$  appears in our expression.  We need to bear in mind that it is applied not just to  the function $ f$. 
Specifically, we are estimating components of the operator $\Theta$, so $\Delta $ applies not only to $f$, but also to the solution $\psi$, and this is the reason why we need to  raise regularity requirements from $\|\cdot\|_s$ to $\|\cdot\|_{s+2}$. 
More precisely, for any $\varphi$ sufficiently smooth we have in the standard $H^s$ Sobolev norm    
\begin{align*}
	&\left \Vert	\int_0^h \int_0^{  t_1} \int_0^{  t_2}  (H_2 + H_4 ) \dd   t_3  \dd   t_2 \dd   t_1 \varphi\right\Vert_s  \leq \left\Vert    \int_0^h \int_0^{  t_1} \int_0^{  t_2} 	\Delta \left[  f(  t_1) - 2f(  t_2) + f(  t_3) \right]   \dd   t_3  \dd   t_2 \dd   t_1 \varphi \right\Vert_s \\
	\nonumber
	&\quad \quad  \quad+ \left\Vert \int_0^h \int_0^{  t_1} \int_0^{  t_2} 	\left[  f(  t_1) - 2f(  t_2) + f(  t_3) \right]\Delta    \dd   t_3  \dd   t_2 \dd   t_1  \varphi  \right\Vert_s \\
	\nonumber
	&\quad \quad \quad+ \left\Vert \int_0^h \int_0^{  t_1} \int_0^{  t_2} 	\left[4 f(  t_1) f(  t_3) -2  f(  t_1) f(  t_2) -2  f( t_3) f( t_2)\right] \dd   t_3  \dd   t_2 \dd   t_1  \varphi \right\Vert_s\\
	\nonumber
	& \quad \quad \quad \leq   C \: h^5 \widetilde{ \mathcal{L}}\Vert \varphi\Vert_{s+2} + C \: \widetilde{ \mathcal{A}} \min \left\lbrace h^{3},\dfrac{h^2}{ \omega_{\min} }, h^5\omega^2_{\max}  \right\rbrace  \Vert \varphi\Vert_{s+2} +  C  \: \widetilde{ \mathcal{L}}  \: \widetilde{ \mathcal{A}} \min \left\lbrace h^{3},\dfrac{h^2}{ \omega_{\min} }, h^5\omega^2_{\max}   \right\rbrace  \Vert \varphi\Vert_{s}  \\
	& \quad \quad \quad \leq   C \: h^5 \widetilde{ \mathcal{L}} \: \Vert \varphi\Vert_{s+2} + C \: \widetilde{ \mathcal{A}} \: \error \: \Vert \varphi\Vert_{s+2} +  C  \: \widetilde{ \mathcal{L}}  \: \widetilde{ \mathcal{A}} \: \error  \Vert \varphi\Vert_{s}
\end{align*}
To explicate further this inequality, we estimate one of the integrands below 
\begin{align*}
	\bigg\|  \int_0^h \int_0^{  t_1} \int_0^{  t_2} & \Delta 	\left(  f(  t_1) - 2f(  t_2) + f(  t_3) \right)\dd   t_3  \dd   t_2 \dd   t_1 \varphi  \bigg\| _s \\
	& \leq  C  \: \max_{t \in [0,T]} \left\lbrace h^5 \Vert  \partial_t^2 \alpha(t) \Delta \varphi \Vert_s , h^5 \Vert  \nabla \partial_t^2 \alpha(t) \nabla \varphi \Vert_s, h^5 \Vert  \Delta \partial_t^2 \alpha(t)\varphi \Vert_s \right\rbrace \\
	&\mbox{} + C\min \left\{   h^3 \: \max_{t \in [0,T]} \left\{  \Vert   a_n(t)  \Delta \varphi \Vert_s ,  \Vert \nabla  a_n(t) \nabla \varphi \Vert_s,  \Vert  \Delta a_n(t) \varphi \Vert_s  \right \}\right.,\\
	&\hspace*{15pt} \dfrac{h^2} {\omega_{\min}}   \: \max_{t \in [0,T]} \left\{  \Vert   \partial_t a_n(t)  \Delta \varphi \Vert_s ,  \Vert \nabla  \partial_t a_n(t) \nabla \varphi \Vert_s,  \Vert  \Delta \partial_t a_n(t) \varphi \Vert_s  \right\},\\
	& \left.\hspace*{15pt} h^5\omega^2_{\max}  \: \max_{t \in [0,T]} \left\{ \Vert   \partial_t^2 a_n(t)  \Delta \varphi \Vert_s ,  \Vert \nabla  \partial_t^2 a_n(t) \nabla \varphi \Vert_s,  \Vert  \Delta \partial_t^2 a_n(t) \varphi \Vert_s  \right\} \right\}.
\end{align*}

The computations of the  commutators  appearing in $\Theta_4$ are more complicated (details can be found in Appendix \ref{apA}), yet the estimates  are similar to these appearing in $\Theta_3$,
\begin{eqnarray}
	\nonumber
	\Theta_4(t+h,t) & =& \dfrac{1}{12} \int_0^h \int_0^{  t_1} \int_0^{  t_2}  \int_0^{  t_3}\left[  \begin{array}{cc}
		\mathcal{H}_1 &  0\\
		0 &  	\mathcal{H}_2
	\end{array} \right]\dd   t_4  \dd   t_3  \dd   t_2 \dd   t_1,
\end{eqnarray}
where 
\begin{eqnarray*}
	\mathcal{H}_1	&\!\!\!=\!\!\!&  \Delta  \left[  2f(  t_2)  -2 f(  t_3)  \right] +3 \left[  2f(  t_2)  -2 f(  t_3)  \right]\Delta  + 4f(  t_4) \left[  f(  t_2)  -f(  t_3)  \right] + 4f(  t_1) \left[ f(  t_2)  - f(  t_3)  \right] ;\\
	\mathcal{H}_2	& \!\!\!=\!\!\!&  -3\Delta  \left[  2f(  t_2)  -2 f(  t_3)  \right] - \left[  2f(  t_2)  -2 f(  t_3)  \right]\Delta  - 4f(  t_4) \left[  f(  t_2)  -f(  t_3)  \right] - 4f(  t_1) \left[  f(  t_2)  - f(  t_3)  \right] .
\end{eqnarray*}

Observing  that 
$$  f(  t_i)  - f(  t_j) = h (f’(\xi_i)-f’(\xi_j)), \quad {\rm for} \ 0 < t_i , t_j < h, \quad {\rm and}\ {\rm certain}\  0 < \xi_i , \xi_j < h  $$
and recalling Theorem \ref{thm1} we conclude that for sufficiently smooth $\varphi$
\begin{align*}
	\left\Vert  	\int_0^h \int_0^{  t_1} \int_0^{  t_2} \int_0^{  t_3}  \mathcal{H}_1 \dd   t_4  \dd   t_3  \dd   t_2 \dd   t_1\varphi  \right  \Vert_s
	& \leq    C \: h^5 \widetilde{ \mathcal{L}} \Vert \varphi  \Vert_{s+2} + C \: \widetilde{ \mathcal{A}} \min \left\lbrace h^{4},\dfrac{h^3}{ \omega_{\min} },h^5 \omega_{\max} \right\rbrace  \Vert \varphi  \Vert_{s+2} \\
	\nonumber
	& + C  \: \widetilde{ \mathcal{L}}  \: \widetilde{ \mathcal{A}}  \min \left\lbrace h^{4},\dfrac{h^3}{\omega_{\min}}, h^5 \omega_{\max} \right\rbrace  \Vert \varphi\Vert_{s} .
\end{align*}

The estimate of the quadruple integral of $ \mathcal{H}_2 $ follows along similar lines. \\

Components $ \Theta_k(t+h,t),\; k\geq5 $ of the Magnus expansion (\ref{6}) require $(k-4)$ more integrals of $ \mathcal{H}_1 $ and $ \mathcal{H}_2 $ which scales their magnitude by $h^{k-4}$. For this reason $ \Theta_k(t+h,t),\; k\geq5 $ do not contribute to leading error term of the truncation.

Summing up the entries of the matrix $\Theta_3(t+h,t)    + \Theta_4(t+h,t)$  constituting the leading error term committed by truncating the Magnus expansion can be bounded for any  sufficiently smooth function $\varphi$ by
$$
C  \left[ \widetilde{ \mathcal{L}}  \: \widetilde{ \mathcal{A}}  \:  \error  \Vert \varphi\Vert_{s} +  \left( \: \widetilde{ \mathcal{A}}  \error + h^5 \: \widetilde{ \mathcal{L}}\right)  \: \Vert \varphi \Vert_{s+2} \right]. 
$$

\subsection{Leading error terms of  the Strang splitting}\label{STRANG_CALCULATIONS}

Let
\begin{displaymath}
	X = \int_0^h\int_0^{t_1}[A(t_1),A(t_2)]\dd t_2 \dd t_1 =\int_0^h\int_0^{t_1}	\left[  
	\begin{array}{cc}
		f( t_1)-f(t_2) & 0 \\
		0 & f( t_2)-f(t_1)
	\end{array}
	\right]\!\dd t_2 \dd t_1,
\end{displaymath} 
\begin{displaymath}
	Y = \int_0^h \left[  \begin{array}{cc}
		0 & 1 \\
		\Delta  + f(t_1)& 0 
	\end{array} \right] \!\dd t_1,
\end{displaymath}
and consider their twofold nested commutators (calculations are presented in Appendix \ref{strang_est})

\begin{eqnarray*}
	[Y,[Y,X]] &=&  \left[  \begin{array}{cc}
		-h^2 \Delta  \mathcal{F} -3 h^2 \mathcal{F} \Delta   -  4h \mathcal{F} F	 & 0\\
		0	& 3h^2 \Delta  \mathcal{F} +h^2 \mathcal{F} \Delta   +  4h \mathcal{F} F	
	\end{array} \right]\!; \\{}
	[X,[X,Y]] &=&  \left[  \begin{array}{cc}
		0 & 4h \mathcal{F}^2\\
		2h\mathcal{F} \Delta  \mathcal{F} + h\mathcal{F}^2 \Delta  + h \Delta  \mathcal{F}^2 + 4\mathcal{F}^2 F& 0 
	\end{array} \right]\!,
\end{eqnarray*}
where
\begin{displaymath}
	F = \int_0^h f( t+ t_1) \dd t_1 \qquad {\rm and} \qquad \mathcal{F} = \int_0^h \int_0^{t_1} \left[ f( t+t_2)-f(t+ t_1) \right]\dd t_2\dd t_1.	
\end{displaymath}

Using Theorem \ref{thm1} we obtain the following estimates
\begin{eqnarray*}
	| F |  &\leq&  C \: h  \max_{t \in [0,T]} \left\lbrace  \Vert \alpha(t)\Vert_s \right\rbrace +  C \: h \max_{t \in [0,T]} \left\lbrace  \Vert  a_n(t)\Vert _s, \Vert  \partial_t a_n(t)\Vert _s \right\rbrace \leq C h(\widetilde{ \mathcal{L}} + \widetilde{ \mathcal{A}}), \\
	|\mathcal{F}|   &\leq&  C \: h^2  \max_{t \in [0,T]} \left\lbrace h \Vert \partial_t \alpha(t)\Vert_s  \right\rbrace + C \: \min \left\lbrace h^2,\dfrac{h}{\omega_{\min}}, h^3 \omega_{\max} \right\rbrace \max_{t \in [0,T]} \left\lbrace  \Vert  a_n(t)\Vert_s,   \Vert  \partial_t a_n(t)\Vert_s \right\rbrace \\
	& \leq & C\:h^3 \widetilde{ \mathcal{L}} +  C \widetilde{ \mathcal{A}}\min \left\lbrace h^2,\dfrac{h}{\omega_{\min}}, h^3 \omega_{\max} \right\rbrace, \\
	\left\| \Delta  \mathcal{F} \varphi \right\|_s  & \leq & C \: h^2 \: \max_{t \in [0,T]} \left\lbrace h \left\|  \partial_t \alpha(t)\Delta \varphi \right\|_s , h \left\|  \nabla \partial_t \alpha(t)\nabla \varphi \right\|_s, h \left\|  \Delta \partial_t \alpha(t)\varphi \right\|_s  \right\rbrace \\
	&&\mbox{}+  C \:\widetilde{ \mathcal{A}} \min  \left\lbrace h^2,\dfrac{h}{\omega_{\min}}, h^3 \omega_{\max} \right\rbrace \Vert \varphi  \Vert_{s+2} \\
	& \leq &  C \: \left( h^3 \widetilde{ \mathcal{L}} +  \widetilde{ \mathcal{A}}\min \left\lbrace h^2,\dfrac{h}{\omega_{\min}}, h^3 \omega_{\max} \right\rbrace \right) \Vert \varphi  \Vert_{s+2},\\
	\left\| \mathcal{F} \Delta   \varphi \right\|_s   &  \leq &C \:  \left( h^3 \widetilde{ \mathcal{L}} +  \widetilde{ \mathcal{A}}\min \left\lbrace h^2,\dfrac{h}{\omega_{\min}}, h^3 \omega_{\max} \right\rbrace\right) \Vert \varphi  \Vert_{s+2},\\
	\nonumber
	\left\| \mathcal{F} F \varphi  \right\|_s & \leq & C \: \left(h^3 \widetilde{ \mathcal{L}} +  \widetilde{ \mathcal{A}}\min \left\lbrace h^2,\dfrac{h}{\omega_{\min}}, h^3 \omega_{\max} \right\rbrace\right) \left( h \widetilde{ \mathcal{L}}+\widetilde{ \mathcal{A}}h\right)  \Vert \varphi  \Vert_{s}, \\
	\left\| \mathcal{F} \Delta  \mathcal{F} \varphi   \right\|_s  & \leq &C \:  \left( h^3 \widetilde{ \mathcal{L}} +  \widetilde{ \mathcal{A}}\min \left\lbrace h^2,\dfrac{h}{\omega_{\min}}, h^3 \omega_{\max} \right\rbrace\right)^2 \Vert \varphi  \Vert_{s+2}, \\
	\left\| \mathcal{F}^2 \Delta  \varphi   \right\|_s  & \leq & C \: \left( h^3 \widetilde{ \mathcal{L}} +  \widetilde{ \mathcal{A}}\min \left\lbrace h^2,\dfrac{h}{\omega_{\min}}, h^3 \omega_{\max} \right\rbrace\right)^2 \Vert \varphi  \Vert_{s+2},\\ \nonumber
	\left\| \Delta  \mathcal{F}^2  \varphi   \right\|_s & \leq & C \: \left(h^3 \widetilde{ \mathcal{L}} +  \widetilde{ \mathcal{A}}\min \left\lbrace h^2,\dfrac{h}{\omega_{\min}}, h^3 \omega_{\max} \right\rbrace\right)^2 \Vert \varphi  \Vert_{s+2}, \\
	\left\| \mathcal{F}^2 F \varphi   \right\|_s & \leq &  C \: \left(h^3 \widetilde{ \mathcal{L}} +  \widetilde{ \mathcal{A}}\min \left\lbrace h^2,\dfrac{h}{\omega_{\min}}, h^3 \omega_{\max}  \right\rbrace\right)^2 \left( h \widetilde{ \mathcal{L}}+\widetilde{ \mathcal{A}}h\right) \Vert \varphi  \Vert_{s}.
\end{eqnarray*}
Grouping all this together, we observe that the leading error term of Strang splitting  is bounded by 
$$
C \left[ \left( h^5\widetilde{ \mathcal{L}}^2 +h^5\widetilde{ \mathcal{L}} \widetilde{ \mathcal{A}}+\widetilde{ \mathcal{A}}^2 \error + \widetilde{ \mathcal{L}}  \: \widetilde{ \mathcal{A}}  \error \right)  \Vert \varphi  \Vert_{s} + \left(   h^5 \widetilde{ \mathcal{L}} +\widetilde{ \mathcal{A}}  \error  \right)  \Vert \varphi  \Vert_{s+2} \right]
$$

and that the sought accuracy, $\mathcal{O}(h^5+\min\{h^3,h^2/\omega_{\min},h^5\omega_{\max}^2\})$ and   $\mathcal{O}(\min\{h^3,h^2/\omega_{\min},h^5\omega_{\max}^2\})$ in the special case $\alpha(\xx,t)\equiv 0$, is satisfied.\\

$k$-fold nested commutators, $k\geq3$, do not contribute to the principal error term. Indeed, they involve additional multiplications by expressions of magnitude $h$ scaling the twofold nested commutators by $h^{k-3}$. This observation is consistent with the work in \cite{jahnke2000error}, where the derivation and analysis of the error bounds of Strang splitting is based only on the twofold nested commutators $[Y,[Y,X]]$ and $[X,[X,Y]]$.

\subsection{Leading error terms of  the 4th order compact splittings}\label{CC_CALCULATIONS}

Proposed methods $\Gamma^{[4]}_1$ and $\Gamma_2^{[4]}$ differ by the choice of matrices $X$ and $Y$ in  compact splittings (\ref{Splitting4a}).
Let us recall that to obtain $\Gamma^{[4]}_1$ we took 
$X=\left[  \begin{array}{cc}
	0 & 0 \\
	D & 0 
\end{array} \right]$ and
$
Y= \left[  \begin{array}{cc}
	0 & h \\
	F & 0 
\end{array} \right], 
$
where $D:=h\Delta$ and $F:=   \int_0^h f(t+ t_1)\dd t_1$.\\

It is not difficult to check that the leading error terms of these splittings satisfy

\begin{align*}
	[Y,[Y,[Y,[Y,X]]]]  &=  \left[  \begin{array}{cc}
		0 & -4h^3FD-h^3DF \\
		h^2F^2D+6h^2FDF +h^2DF^2 & 0 
	\end{array} \right] \\
	[X,[X,[X,[X,Y]]]] &=  \left[  \begin{array}{cc}
		0 & 0 \\
		0 & 0 
	\end{array} \right] \\
	[X,[Y,[Y,[Y,X]]]] &=  \left[  \begin{array}{cc}
		0 & 0 \\
		2h^2DFD+3h^2D^2F+3h^2FD^2 & 0 
	\end{array} \right]  \\
	[Y,[X,[X,[X,Y]]]] &=  \left[  \begin{array}{cc}
		0 & 0 \\
		0 & 0 
	\end{array} \right]  \\
	[Y,[X,[Y,[X,Y]]]]  &=  \left[  \begin{array}{cc}
		0 & -4h^3D^2 \\
		2h^2(FD^2+D^2F) & 0 
	\end{array} \right]  \\
	[X,[Y,[X,[Y,X]]]] &=  \left[  \begin{array}{cc}
		0 & 0 \\
		4h^2 D^3& 0 
	\end{array} \right]  \\
\end{align*}

In the case of $\Gamma^{[4]}_2$ we took 
$X=\left[  \begin{array}{cc}
	0 & 0 \\
	D+F & 0
\end{array} \right]$ and
$
Y= \left[  \begin{array}{cc}
	0 & h \\
	0 & 0 
\end{array} \right], 
$
obtaining
\begin{align*}
	[Y,[Y,[Y,[Y,X]]]]  &=  \left[  \begin{array}{cc}
		0 & 0 \\
		0 & 0 
	\end{array} \right] \\
	[X,[X,[X,[X,Y]]]] &=  \left[  \begin{array}{cc}
		0 & 0 \\
		0 & 0 
	\end{array} \right]  \\
	[X,[Y,[Y,[Y,X]]]] &=  \left[  \begin{array}{cc}
		0 & 0 \\
		0 & 0 
	\end{array} \right]  \\
	[Y,[X,[X,[X,Y]]]] &=  \left[  \begin{array}{cc}
		0 & 0 \\
		0 & 0 
	\end{array} \right]  \\
	[Y,[X,[Y,[X,Y]]]]  &=  \left[  \begin{array}{cc}
		0 & 4h^3(D+F)^2 \\
		0 & 0 
	\end{array} \right]  \\
	[X,[Y,[X,[Y,X]]]] &=  \left[  \begin{array}{cc}
		0 & 0 \\
		-4h^2(D+F)^3 & 0 
	\end{array} \right]  \\
\end{align*}

All this leads to the conclusion, that the leading error terms of compact splittings are bounded by 
\\

$$ Ch^5\left[\left(\widetilde{ \mathcal{L}} +\widetilde{ \mathcal{A}}  + (\widetilde{ \mathcal{L}} +\widetilde{ \mathcal{A}} )^2  \right) \Vert \varphi  \Vert_{s+2}+\left(1+ \widetilde{ \mathcal{L}} +\widetilde{ \mathcal{A}} \right) \Vert \varphi  \Vert_{s+4}+\Vert \varphi  \Vert_{s+6}  \right]  $$
\\
in the case of $\Gamma^{[4]}_1$ and by\\

$$Ch^5 \left[ \left(  \left( \widetilde{ \mathcal{L}} +\widetilde{ \mathcal{A}} \right) ^2 +  \left( \widetilde{ \mathcal{L}} +\widetilde{ \mathcal{A}} \right) ^3\right) \Vert \varphi  \Vert_{s} +\left( \widetilde{ \mathcal{L}} +\widetilde{ \mathcal{A}} +\left( \widetilde{ \mathcal{L}} +\widetilde{ \mathcal{A}} \right) ^2\right) \Vert \varphi  \Vert_{s+2}+\left( 1+\widetilde{ \mathcal{L}} +\widetilde{ \mathcal{A}} \right) \Vert \varphi  \Vert_{s+4}+\Vert \varphi  \Vert_{s+6}\right] $$
\\
in the case of $\Gamma^{[4]}_2$.

\section{The structure of the error}\label{ERRORS}

Section \ref{ESTIMATES}  was concerned with estimates of the elements of  exponentiated matrices, while seeking bounds on the error, we are interested in  differences between  exponentials of  matrices. To establish the connection between these objects let us start with the following proposition, whose proof is trivial.

\begin{proposition}\label{proposition_error}
	Let us assume that $2\times 2$ matrix $ Z $ can be expressed in the form  $ Z = X + \xi \left[  \begin{array}{cc}
		1	 & 1\\
		1& 1
	\end{array} \right] $,  where $\xi\in\mathbf{R}$ stands for the magnitude of the perturbation of matrix $Z$. Then 	 
	$$
	\ee^{Z} = \ee^{X} + 
	\mathcal{O}(\xi) \left[  \begin{array}{cc}
		1	 & 1\\
		1& 1
	\end{array} \right].
	$$
\end{proposition}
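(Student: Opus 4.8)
The plan is to prove this by a direct application of the standard perturbation (variation-of-constants) formula for matrix exponentials, specialised to the very simple structure at hand. Write $Z = X + \xi P$ where $P = \left[\begin{smallmatrix} 1 & 1 \\ 1 & 1 \end{smallmatrix}\right]$. The claim is then that $\ee^{Z} - \ee^{X}$ has all entries of size $\OO(\xi)$, which I would record in the matrix notation used in the statement as $\OO(\xi)\left[\begin{smallmatrix} 1 & 1 \\ 1 & 1 \end{smallmatrix}\right]$.

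First I would invoke the integral identity
\begin{displaymath}
	\ee^{Z} - \ee^{X} = \int_0^1 \ee^{(1-\tau)Z}\,(\xi P)\,\ee^{\tau X}\,\dd\tau,
\end{displaymath}
which follows from differentiating $\tau \mapsto \ee^{(1-\tau)Z}\ee^{\tau X}$ and integrating from $0$ to $1$. Factoring out $\xi$, this already shows $\ee^{Z}-\ee^{X} = \xi\int_0^1 \ee^{(1-\tau)Z} P\, \ee^{\tau X}\,\dd\tau$. Second, I would bound the remaining integral: since $X$ and $Z$ are fixed $2\times 2$ matrices (independent of the perturbation parameter $\xi$ in the relevant asymptotic regime), $\|\ee^{(1-\tau)Z}\|$ and $\|\ee^{\tau X}\|$ are bounded uniformly for $\tau\in[0,1]$ by some constant depending only on $\|X\|$ and $\|Z\|$; hence $\left\|\int_0^1 \ee^{(1-\tau)Z}P\ee^{\tau X}\,\dd\tau\right\| \le C$. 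Therefore every entry of $\ee^{Z}-\ee^{X}$ is bounded in modulus by $C\xi$, which is exactly the assertion. As a small caveat one should note that if $Z$ itself depends on $\xi$, one still has $\|\ee^{(1-\tau)Z}\|$ bounded uniformly as $\xi\to 0$ because $Z\to X$, so the constant can be taken uniform; this is the only point that needs a word of care.

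There is essentially no obstacle here — the proof is a two-line computation, which is why the paper calls it trivial. The only thing worth being slightly careful about is the meaning of the $\OO$-symbol inside a matrix: it is shorthand for "each entry is $\OO(\xi)$", and the identity above delivers precisely that, with an explicit constant $C = C(\|X\|)$ (or $C(\|X\|,\|Z\|)$). One could alternatively give the proof by simply Taylor-expanding $\ee^{Z}$ in powers of $\xi$ around $\xi=0$ via the Duhamel series $\ee^{X+\xi P} = \ee^{X} + \xi\int_0^1\ee^{(1-\tau)X}P\ee^{\tau X}\dd\tau + \OO(\xi^2)$, but the closed integral identity is cleaner and avoids discussing convergence of the series. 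In the paper's subsequent use, $X$ is an anti-diagonal or block matrix of the Strang/compact-splitting type and $\xi$ collects the small quantities $\widetilde{\mathcal A}\,\error$, $h^5\widetilde{\mathcal L}$, etc.\ estimated in Section~\ref{ESTIMATES}, so the proposition is exactly the bridge converting entrywise bounds on the exponents into bounds on the difference of the exponentials.
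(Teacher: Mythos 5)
Your proof is correct. The paper declares this proposition trivial and omits the argument entirely, and the variation-of-constants identity $\ee^{Z}-\ee^{X}=\xi\int_0^1 \ee^{(1-\tau)Z}P\,\ee^{\tau X}\,\dd\tau$ together with the uniform bound $\|\ee^{(1-\tau)Z}\|,\|\ee^{\tau X}\|\leq C$ for $\tau\in[0,1]$ is exactly the standard computation the authors have in mind; your remarks on the entrywise meaning of the $\mathcal{O}(\xi)$ matrix and on the uniformity of the constant as $Z\to X$ are also apt.
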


\noindent
In the following theorem we estimate the local error by its leading term, which for sufficiently small time step is a valid and consistent error estimate. By abuse of notation we will not distinguish between scalar and vectorial functions and use the corresponding point wise $H^s$ Sobolev norm.

\begin{theorem}[Error scaling]\label{thm:errorS}
	Let us assume that Assumption \ref{MainAssumption} holds and let us denote by $\phi^t$ the exact flow, i.e., $z(t) = \phi^t(z_0)$ and by $\Phi^h$ the numerical flow, i.e.,
	$$
	z^{n+1}= \Phi^h(z^n)
	$$
	where $\Phi^h$ corresponds to Algorithm $\Gamma^{[4]}_1$ or $\Gamma^{[4]}_2$. Then, for $\varphi \in H^{s+6}(\mathbb{T}^d)$ we have that
	\begin{align*}
		\Vert  \phi^h(\varphi)
		-  \Phi^h(\varphi)\Vert_s &\leq  
		C\left[\left(  \widetilde{ \mathcal{L}}  \: \widetilde{ \mathcal{A}}+\widetilde{ \mathcal{A}}^2\right)  \min \left\lbrace h^{3},\dfrac{h^2}{ \omega_{\min}},  h^5\omega_{\max}^2 \right\rbrace  + h^5\left( \widetilde{ \mathcal{L}}  + \widetilde{ \mathcal{A}}\right) ^2 +  h^5\left( \widetilde{ \mathcal{L}}  + \widetilde{ \mathcal{A}}\right) ^3\right] \Vert \varphi  \Vert_{s} \\
		\nonumber
		& + C \left[\widetilde{ \mathcal{A}} \min \left\lbrace h^{3},\dfrac{h^2}{ \omega_{\min}},  h^5\omega_{\max}^2 \right\rbrace + h^5 \left(\widetilde{ \mathcal{L}}  + \widetilde{ \mathcal{A}}+\left( \widetilde{ \mathcal{L}}  + \widetilde{ \mathcal{A}}\right) ^2 \right)  \right]  \Vert \varphi  \Vert_{s+2}\\
		\nonumber		
		& + C \left[  h^5 \left( 1+ \widetilde{ \mathcal{L}}  + \widetilde{ \mathcal{A}} \right)  \right]  \Vert \varphi  \Vert_{s+4}\\
		\nonumber
		& + C   h^5  \Vert \varphi  \Vert_{s+6}
	\end{align*}
	where constants $\widetilde{ \mathcal{A}}$, $\widetilde{ \mathcal{L}}$  are defined as in Definition \ref{errors_def}.
\end{theorem}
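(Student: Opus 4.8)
The plan is to write the one-step error as a telescoping sum over the three approximations that define $\Phi^h$, and to control each term by combining the explicit estimates already obtained in Subsections \ref{MAGNUS_CALCULATIONS}--\ref{CC_CALCULATIONS} with Proposition \ref{proposition_error}. Recall that the exact one-step map is $\phi^h=\ee^{\Theta(t+h,t)}$ with $\Theta=\sum_{k\ge1}\Theta_k$, cf.\ (\ref{6}); that $\Psi_1^h:=\ee^{\Theta_1+\Theta_2}$ is the map obtained after truncating the Magnus series to (\ref{4th_Magnus}); that $\Psi_2^h:=\ee^{\frac12\Theta_2}\ee^{\Theta_1}\ee^{\frac12\Theta_2}$ is the map after the Strang splitting (\ref{4th_Strang}); and that $\Phi^h=\ee^{\frac12\Theta_2}\,\Gamma^{\mathrm{CC}}\,\ee^{\frac12\Theta_2}$ is the full scheme, where $\Gamma^{\mathrm{CC}}$ denotes the five-factor compact splitting (\ref{Splitting4a}) of $\ee^{\Theta_1}$ for the choice of $X,Y$ corresponding to $\Gamma^{[4]}_1$ or $\Gamma^{[4]}_2$ (note that the diagonal outer factor of both algorithms is exactly $\ee^{\frac12\Theta_2}$). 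Then
$$
\phi^h(\varphi)-\Phi^h(\varphi)=(\phi^h-\Psi_1^h)\varphi+(\Psi_1^h-\Psi_2^h)\varphi+\ee^{\frac12\Theta_2}\bigl(\ee^{\Theta_1}-\Gamma^{\mathrm{CC}}\bigr)\ee^{\frac12\Theta_2}\varphi,
$$
so it suffices to estimate the three summands separately and add them up.

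For each difference of exponentials I would use the exact identity $\ee^{B+E}-\ee^{B}=\int_0^1\ee^{s(B+E)}\,E\,\ee^{(1-s)B}\,\dd s$ (equivalently Proposition \ref{proposition_error}, with $E$ in the role of the perturbation), so that, provided the propagators involved are bounded on $H^{s'}(\mathbb{T}^d)$, $s'\in\{s,s+2,s+4,s+6\}$, uniformly in $h\le1$ and in the frequencies $\omega_n$, the error is controlled by the operator norm of the corresponding defect: $E=\Theta_3+\Theta_4+\sum_{k\ge5}\Theta_k$ for $\phi^h-\Psi_1^h$; the Strang defect, governed as in \cite{jahnke2000error} by the twofold nested commutators $[Y,[Y,X]]$ and $[X,[X,Y]]$ of Subsection \ref{STRANG_CALCULATIONS}, for $\Psi_1^h-\Psi_2^h$; and the $\mathcal{O}(h^5)$ Baker--Campbell--Hausdorff remainder of (\ref{Splitting4a}), built from the six fourfold nested commutators of $X,Y$ tabulated in Subsection \ref{CC_CALCULATIONS}, for $\ee^{\Theta_1}-\Gamma^{\mathrm{CC}}$. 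Each of these defects has already been bounded: by (\ref{h1h3}) and the estimates that follow it in Subsection \ref{MAGNUS_CALCULATIONS}; by the list of norm estimates for $F$, $\mathcal{F}$ and their products in Subsection \ref{STRANG_CALCULATIONS}; and by the commutator table together with the two concluding bounds of Subsection \ref{CC_CALCULATIONS}. Substituting these, applying each bound to $\ee^{(1-s)B}\varphi$ in place of $\varphi$, grouping the resulting terms by Sobolev index, absorbing the lower mixed powers $h^5\widetilde{\mathcal{L}}^2,\ h^5\widetilde{\mathcal{L}}\widetilde{\mathcal{A}},\dots$ into $h^5(\widetilde{\mathcal{L}}+\widetilde{\mathcal{A}})^2$ and the cubic contributions of $\Gamma^{[4]}_2$ into $h^5(\widetilde{\mathcal{L}}+\widetilde{\mathcal{A}})^3$, and checking that every $\min$-expression that occurs --- in particular products of factors $\min\{h^2,h/\omega_{\min},h^3\omega_{\max}\}$ coming from the Strang defect --- is dominated by a multiple of $\error=\min\{h^3,h^2/\omega_{\min},h^5\omega_{\max}^2\}$, reproduces the four-line bound of the statement (the bound for $\Gamma^{[4]}_2$ being the larger and hence the one quoted).

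The remaining ingredient, and the point I expect to be the main obstacle, is the uniform boundedness of the propagators entering the Duhamel representation. One must check that $\phi^h=\ee^{\Theta}$ and $\Psi_1^h=\ee^{\Theta_1+\Theta_2}$ act boundedly on each $H^{s'}(\mathbb{T}^d)$, $s'\in\{s,\dots,s+6\}$, with constants independent of $h\le1$ and of the $\omega_n$ (for $h\le1$ the Magnus series converges and, as remarked at the end of Subsection \ref{MAGNUS_CALCULATIONS}, its tail $\sum_{k\ge5}\Theta_k$ is dominated by $\Theta_4$ and does not affect the leading term); that $\ee^{\pm\mathcal{F}}$ is bounded on each such $H^{s'}$, which holds since $\mathcal{F}$ is, after semidiscretization, a multiplication operator built from $h$-weighted integrals of $f$, of size $\mathcal{O}(h^2)$ and bounded uniformly in the frequencies by Theorem \ref{thm1}; and that each factor of $\Gamma^{\mathrm{CC}}$ is bounded, its nonzero entries being the Fourier multipliers $\cosh\sqrt{h(D+F)}$, $\tfrac h2\,\sinc(\tfrac12\sqrt{hF})$, $\sqrt{(D+F)/h}\,\sinh\sqrt{h(D+F)}$ and the polynomials in $D$, each bounded at the cost of a fixed finite loss of derivatives. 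That derivative loss is exactly what forces the norms $\Vert\varphi\Vert_{s+2}$, $\Vert\varphi\Vert_{s+4}$, $\Vert\varphi\Vert_{s+6}$ on the right-hand side, and it is accommodated by Assumption \ref{MainAssumption}, in particular $s+6>d/2$, so that $H^{s+4}(\mathbb{T}^d)$ coefficients act as multipliers on $H^s$. Beyond this, the work is pure bookkeeping: for every commutator entry of Subsections \ref{MAGNUS_CALCULATIONS}--\ref{CC_CALCULATIONS} one tracks how many Laplacians fall on $\varphi$ rather than on $f$, so that each summand lands in precisely the Sobolev norm claimed, and one verifies that the constants $\widetilde{\mathcal{A}}$, $\widetilde{\mathcal{L}}$ of Definition \ref{errors_def} indeed majorize all the coefficient norms that arise.
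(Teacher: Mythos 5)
Your proposal is correct and follows essentially the same route as the paper, whose proof of Theorem \ref{thm:errorS} is simply the assertion that it ``follows directly from the estimates in Section \ref{ESTIMATES} and from Proposition \ref{proposition_error}'': you telescope over the same three approximation stages (Magnus truncation, Strang splitting, compact splitting), control each defect by the bounds of Subsections \ref{MAGNUS_CALCULATIONS}--\ref{CC_CALCULATIONS}, and convert defect bounds into flow bounds via the perturbation-of-exponential argument. Your version is in fact more explicit than the paper's, in particular in isolating the uniform boundedness of the intermediate propagators on the scale $H^{s},\dots,H^{s+6}$ as the point that needs checking, and in verifying that the Strang-defect minima are dominated by $\error$.
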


The proof of the theorem follows directly from the estimates in Section \ref{ESTIMATES} and from Proposition \ref{proposition_error}.

\medskip

\begin{remark} Let us discuss the error bound of Theorem \ref{thm:errorS} in more detail.
	Note that if the input function $f(t)$ does not involve a highly oscillatory term, that is $a_n\equiv0$, then all derived methods are of order 5 (locally). If, however, some of the coefficients $a_n$ are non-zero, then it is enough to take time steps $h \gtrsim 1/\sqrt[3]{ \min_n\vert \omega_n\vert }$ to obtain an order 5 (locally)  method.  This phenomenon is also observed in  numerical examples, see Section \ref{EXPERIMENTS}. Another interesting observation is that for purely oscillatory input term $f(t)$ (that is when $\alpha(t)\equiv0$), we can obtain a method of order $r$, that is of local error $\mathcal{O}(h^{r+1})$ by choosing time step $h \gtrsim 1/\sqrt[r-1]{ \min_n\vert \omega_n\vert }$.
\end{remark}

The interplay between the oscillatory parameter $\omega$ and  time step $h$  is illustrated in Figures \ref{fig:orders3}--\ref{fig:order_1D_2D}.

\begin{figure}[h]
	\centering
	\captionsetup{justification=centering,margin=0cm}
	\includegraphics[width=0.87\linewidth]{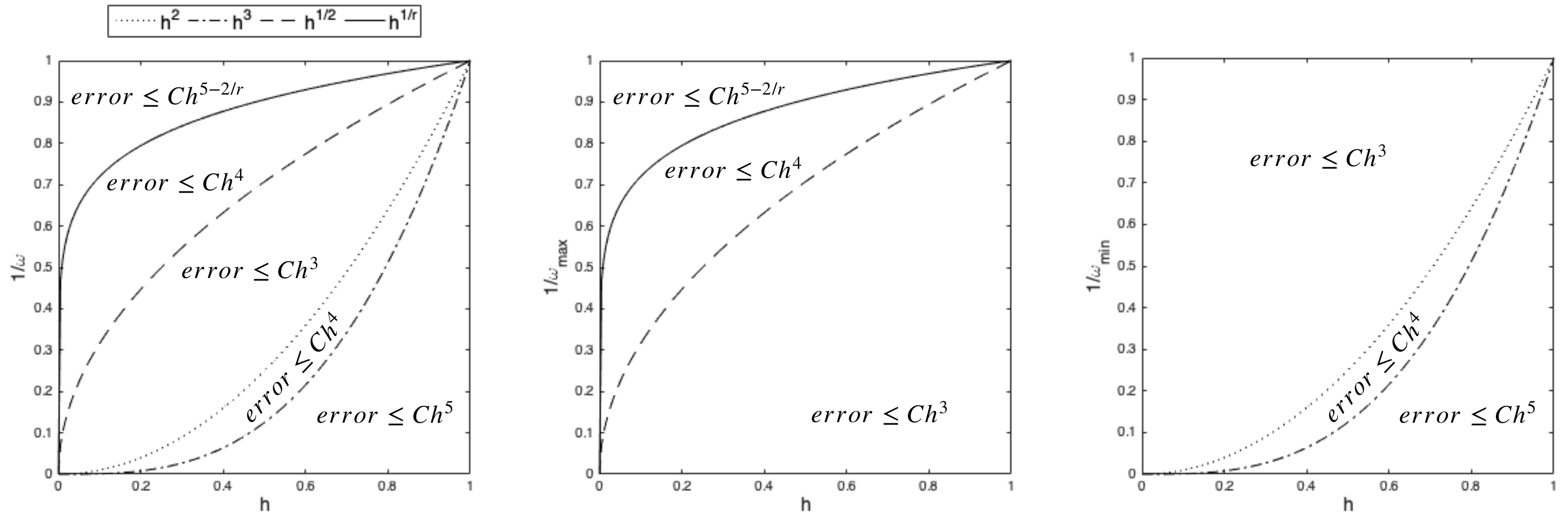}
	\caption{The local error of the proposed numerical methods strongly depends on the oscillatory nature of input term and the ratio between the oscillations $\omega_n$ and time step $h$. The left figure illustrates the accuracy obtained for the monochromatic case ($\omega_{\min}=\omega_{\max}$). The middle figure shows the dependency between the fastest frequency $\omega_{\max}$ and time step $h$, while the right figure  illustrates   the ratio between the slowest frequency $\omega_{\min}$ and time step $h$. The final accuracy (presented in Figure \ref{fig:order_1D_2D}) depends, however on both: $\omega_{\min}$ and $\omega_{\max}$.  In the above graphs we assume  without loss of generality  that  $\alpha (t)\neq 0$. }
	\label{fig:orders3}
\end{figure}

\begin{figure}[t]
	\centering
	\captionsetup{justification=centering,margin=0cm}
	\includegraphics[width=1\linewidth]{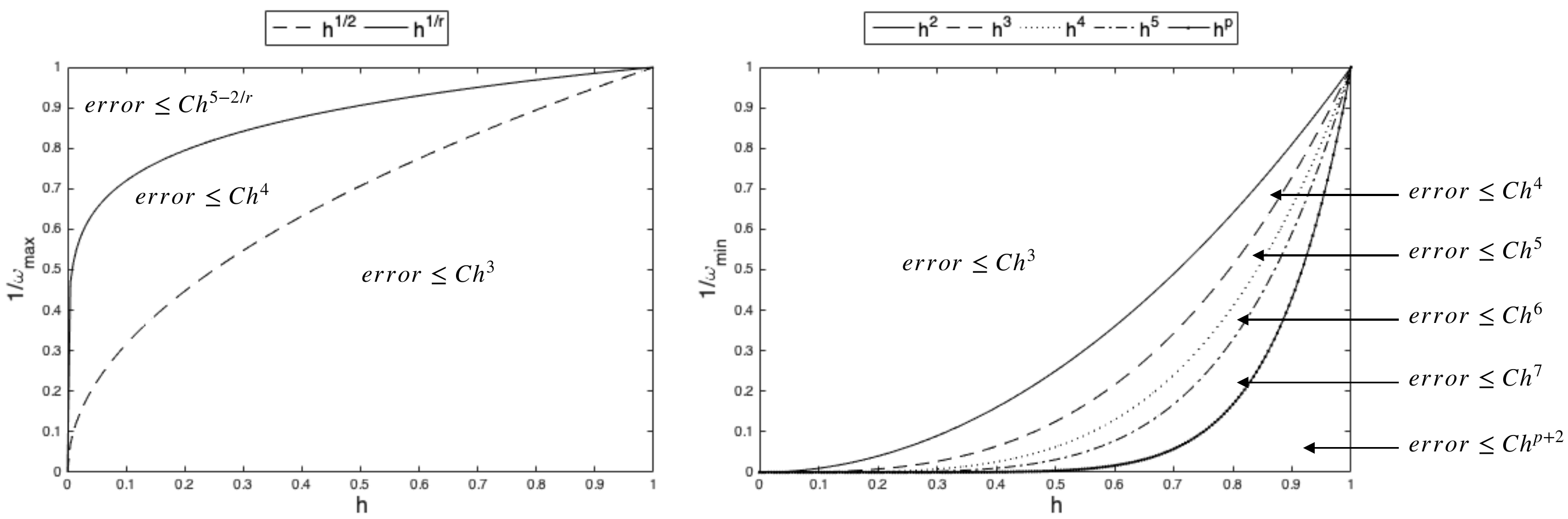}
	\caption{ The left figure illustrates the accuracy in regard of ratio between the fastest frequency $\omega_{\max}$ and time step $h$, while the right figure  illustrates  the ratio between the slowest frequency $\omega_{\min}$ and time step $h$.   Unlikely in Figure \ref{fig:orders3} we assume here that $\alpha (t)= 0$.}
	\label{fig:orders_oscil}
\end{figure}

\newpage
\begin{figure}[h]
	\centering
	\captionsetup{justification=centering,margin=0cm}
	\includegraphics[width=1\linewidth]{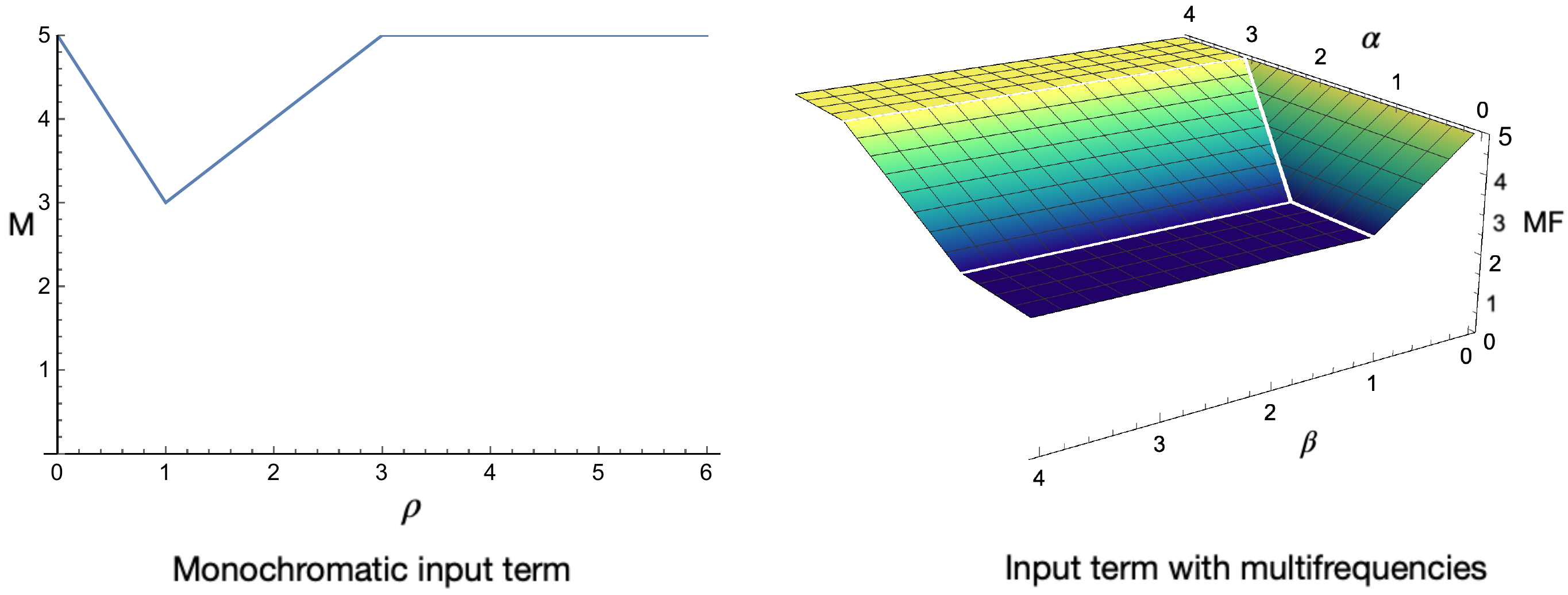}
	\caption{The left figure illustrates the accuracy obtained in the monochromatic case ($\omega_{\min}=\omega_{max}$) and should be understood in the following way: for $\omega_{\min}=h^{-\rho}$ the error of approximation scales like $\mathcal{O}(h^M)$, where $M=\max\{3,2+\rho,5-2\rho\}$. The figure on the right hand side covers the case of multiple frequencies, where for $\omega_{\min}=h^{-\alpha}$, $\omega_{\max}=h^{-\beta}$ the accuracy  of the method is $\mathcal{O}(h^{MF})$, and $MF=\max\{3,2+\alpha,5-2\beta \}$.}
	\label{fig:order_1D_2D}
\end{figure}

\section{Numerical experiments}\label{EXPERIMENTS}

In this section we compare our newly constructed numerical methods with several schemes from the literature. In this respect, the following  methods are considered: 
\begin{itemize}
	\item \textbf{BBCK} $\mathbf{^{[4]}}  $: 4th order method $ \Sigma_{3c}^{[4] }$ from \cite{baderblanes};
	\item  \textbf{BBCK}$\mathbf{^{[6]} } $:  6th order method $ \Sigma_{5c}^{[6] }  $ from \cite{baderblanes};
	\item \textbf{Asympt}$\mathbf{^{[3]}} $: 3rd order asymptotic method from \cite{asympt};
	\item $ \mathbf{\Xi^{[3]} }$: 3rd order method from \cite{3order}.
\end{itemize}

For our experiments we use Fourier method described in \cite{kopriva, trefethen2000spectral}, using 200 modes. As  a reference solution we take  optimal 6-th order Magnus integrator \cite{blanesros}  with a  step size $ h = 10^{-6} $, subsequently carry out the numerical integration with each method using different time steps and measure the $ \ell_2 $ error at the final time. This error is plotted in double-logarithmic scale versus the order of the method and calculation time expressed in seconds.

\medskip
\noindent
\textit{Example 1.} \\
\noindent
In first example  we take a wave equation with time-dependent potential and with frequency $ \omega $, namely 
\begin{eqnarray}\label{example_3}
	\partial_t^2 u &=&  \partial_x^2 u - \left(  1+\varepsilon \cos \omega t \right)\! x^2 u, \quad x\in [-10,10] , \quad t \in [0, 1], \\
	\nonumber
	u(x,0) &=& \ee^{- x^2/2} , \qquad \partial_t u (x,0) = 0, \\
	\nonumber
	u(-10,t) &=& u(10,t) , \quad t \in [0, 1] .
\end{eqnarray}
On the first graph in Figure \ref{fig:4}  we present the initial condition (blue line),  solution at final time step  for $ \varepsilon  = 10, \omega  = 10 $ (yellow line) and solution at final time step  for  $ \varepsilon  = 0.1, \omega  =100 $ (red line). Next two graphs show the evolutions in time of the solutions for $ \varepsilon = 0.1, \omega  = 100 $ and for $\varepsilon = 10, \omega  =10 $.  

\begin{figure}[h]
	\centering
	\includegraphics[width=0.8\linewidth]{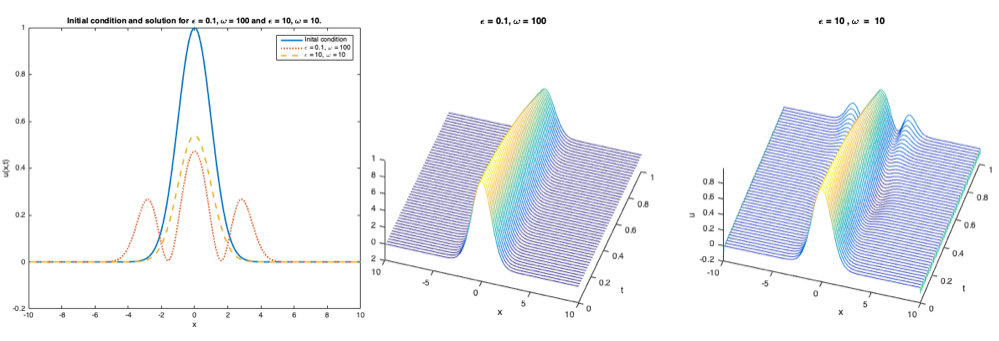}
	\caption{Initial condition, solutions at final time step  and evolution of the solutions in time of (\ref{example_3}) for two pairs of coefficients $ \varepsilon  $ and $ \omega. $}
	\label{fig:4}
\end{figure}

Comparisons of cost {\em vs\/} accuracy for equation (\ref{example_3}) are presented  in Figures \ref{fig:5} and \ref{fig:8}. First of all we observe that the asymptotic method Asympt$^{[3]} $ proposed  in \cite{asympt} is unbeatable for an equation  with extremely oscillatory inhomogeneous terms. Indeed, that asymptotic method was designed with this type of equations in mind. For small $ \omega $ asymptotic method Asympt$^{[3]} $ is completely ineffective, though.

\begin{figure}[h]
	\centering
	\includegraphics[width=0.8\linewidth]{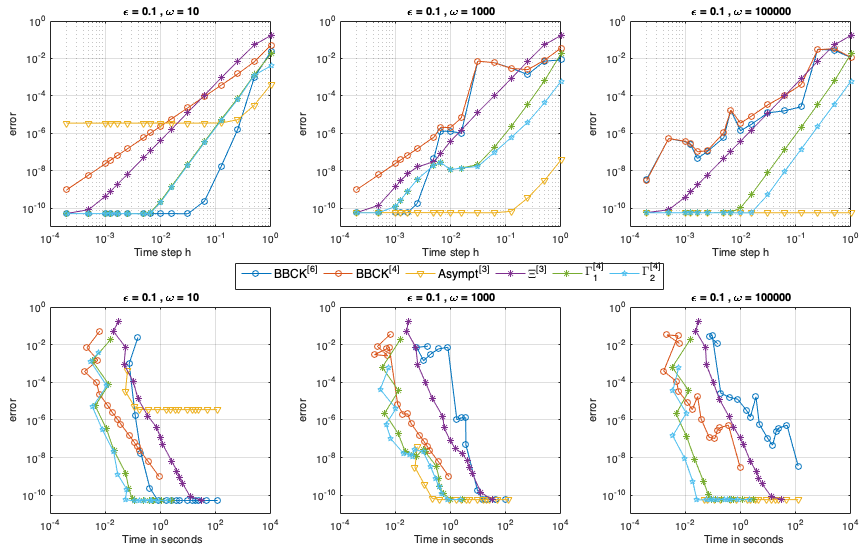}
	\caption{Comparison of order (top row) and of runtime in seconds (bottom row) for equation (\ref{example_3}) for  $  \varepsilon  = 0.1$. }
	\label{fig:5}
\end{figure}
\newpage

In the case of smaller oscillations, when $\omega=10,$  methods $\Gamma^{[4]}_1$ and $\Gamma^{[4]}_2$ preform predictably -- they achieve worse error than the 6th order  BBCK$^{[6]}  $ method, but deliver significantly smaller error  than 4th order method BBCK$^{[4]} $ and  the asymptotic method Asympt$^{[3]} $. As the oscillations get larger, $ \omega = 1000 $, asymptotic method Asympt$^{[3]} $ starts behaving extraordinary - as it was designed - especially and only for extremely large oscillations. Methods BBCK$^{[4]} $ and BBCK$^{[6]} $ require exceedingly small time  steps to handle the oscillations, while our new methods deliver expected errors for all time steps $h$. Moreover, for $h>10^{-2}$ method BBCK$^{[6]}  $ coincides with the method BBCK$^{[4]}.  $  In case of extremely high oscillations, $ \omega = 10^{6}, $ methods BBCK$^{[4]} $ and BBCK$^{[6]} $ present the same order of convergence, close to the second order, while the methods $\Gamma^{[4]}_1$ and $\Gamma_2^{[4]}$ achieve  4th order of convergence from the first, largest, time step $ h = 1 $,  consistently with their theory. Computational cost of methods $\Gamma^{[4]}_1$ and $\Gamma^{[4]}_2$ is low in case of all frequencies $ \omega. $ Method $ \Xi^{[3]} $ preserves  third order regardless of the size of the oscillations $  \omega $. However, we observe that the accuracy of methods $\Gamma^{[4]}_1$ and $\Gamma_2^{[4]}$ is much better. 

\begin{figure}[h]
	\centering
	\includegraphics[width=0.8\linewidth]{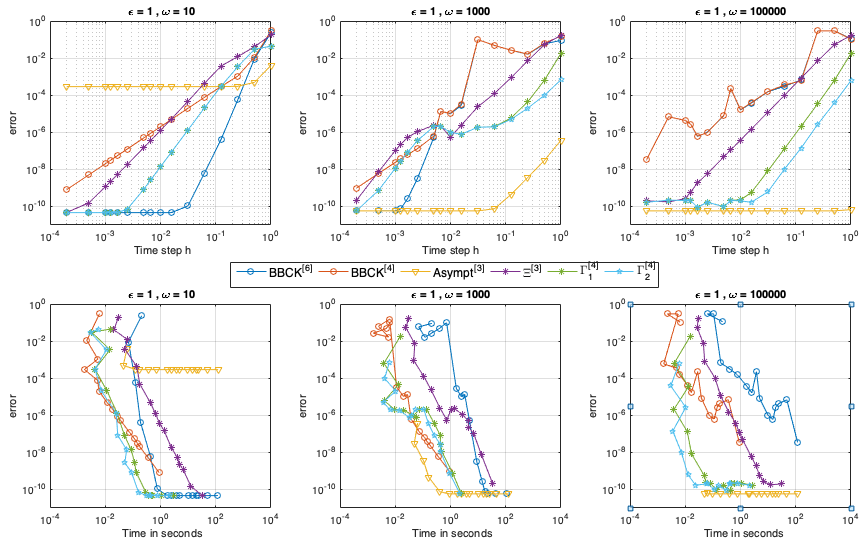}
	\caption{Comparison of orders (top row) and of costs in seconds (bottom row) for equation (\ref{example_3}) and  $  \varepsilon  = 1$. }
	\label{fig:8}
\end{figure}

In Figure \ref{fig:10} we present the comparison of error constants of methods $\Gamma^{[4]}_1$ and $\Gamma_2^{[4]}$ for various chooses of size of spatial step $ \Delta_x. $ Size of a spatial step does not effect the accuracy of the approximation. 

\begin{figure}[h]
	\centering
	\includegraphics[width=0.8\linewidth]{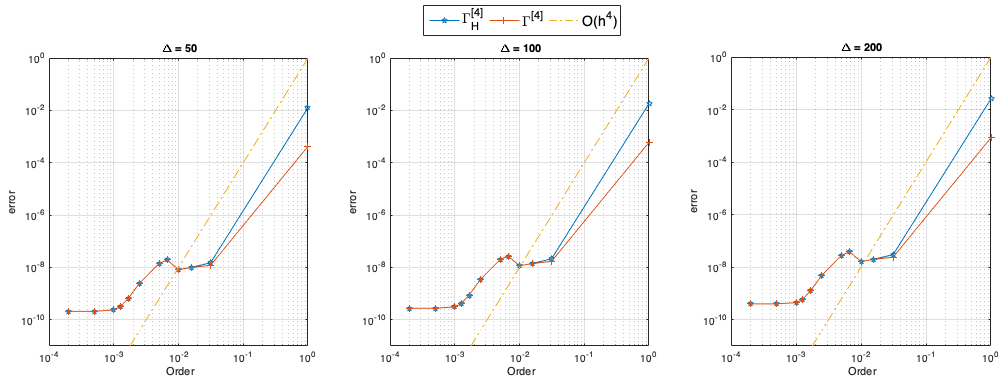}
	\caption{Comparison of error constant of methods $\Gamma^{[4]}_1$ and $\Gamma_2^{[4]}$ for equation (\ref{example_3}) for  $  \varepsilon  = 0.1$ and $ \omega = 1000$.   }
	\label{fig:10}
\end{figure}

\newpage

\medskip
\noindent
\textit{Example 2.}\\
\noindent
Let us consider example featuring large disproportion between the laplacian part and the input term part, where the laplacian part is multiplied by the factor $ 10^{-3} $, and the input term is multiplied by its inverse, i.e. $ 10^3. $
\begin{eqnarray}\label{example_2}
	\partial_t^2 u &=& 10^{-3} \partial_x^2 u - 10^3 \left( 1+ \frac{1}{5} \cos \omega t\right)\! x^2 u, \quad x \in [-\pi,\pi] , \quad t \in [0, 1] ; \\
	\nonumber
	u(x,0)  &=&  \ee^{-\frac{1}{2} (x-3)^2} +  \ee^{-\frac{1}{2} (x+3)^2} , \quad u'(x,0) = 0 ; \\
	\nonumber
	u(-\pi,t)  &=&  u(\pi,t) , \quad t \in [0, 1] .
\end{eqnarray}

In Figure \ref{fig:3} we compare the order of methods (in top row) and cost in seconds (bottom row). It is  easy to see that the lack of proportion between the Laplacian part and the input term part, which can be understood as semiclassical-like regime, has a negative effect on all considered methods. For $\omega=1$  the method $\Gamma^{[4]}_1$ is less computationally costly and obtains better accuracy than $\Gamma^{[4]}_2$. Moreover, in this case the method $\Gamma^{[4]}_1$  achieves an error only slightly larger than the 6th order method BBCK$^{[6]}  $. However, once the oscillation increases, the $\Gamma^{[4]}_1$ and $\Gamma^{[4]}_2$ methods require considerably smaller time steps than the BBCK$^{[6]}$ method, but larger than the BBCK$^{[4]}$ method. Indeed,  for  $ \omega = 500 $ and time step $ h = 10^{-4} $ the 6th order method BBCK$^{[6]}  $ obtains error of size $  10^{-11}$, while 4th order method BBCK$^{[4]}$ achieves error of size $10^{-4}$  and the new 4th order methods $\Gamma^{[4]}_1$  and $\Gamma^{[4]}_2$ obtain error of size $  10^{-8}$.  Having said this, methods $\Gamma_1^{[4]}$ and $\Gamma_2^{[4]}$ are less  computationally costly then methods BBCK$^{[4]}  $ and BBCK$^{[6]}  $. 

\medskip

\begin{figure}[h]
	\centering
	\includegraphics[width=0.9\linewidth]{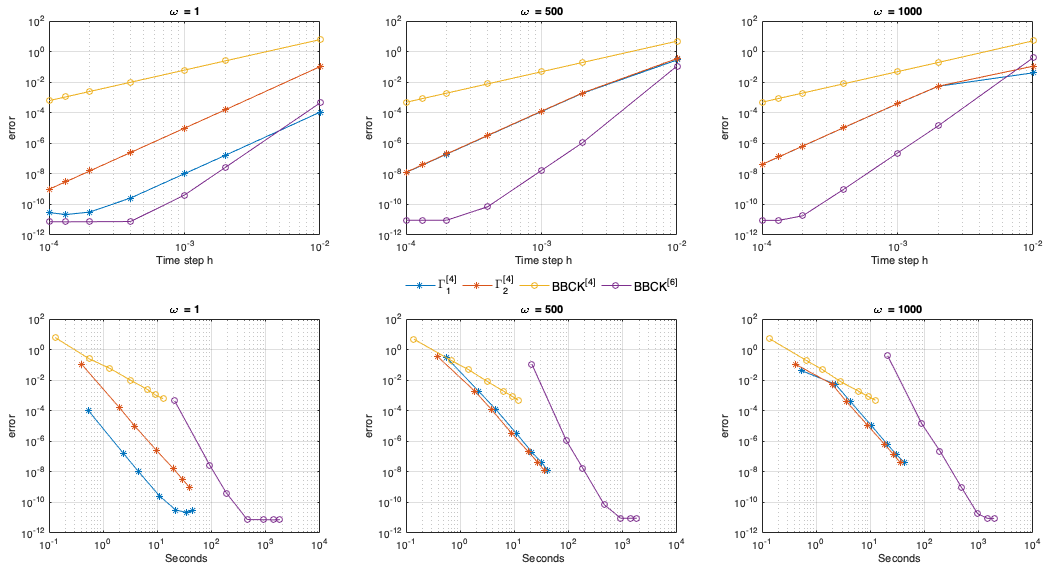}
	\caption{Comparison of orders (first row) and of costs in seconds (second row) for equation (\ref{example_2}). }
	\label{fig:3}
\end{figure}

\medskip 
\noindent
\textit{Example 3.} \\
\noindent
In this example we consider a wave equation in two spatial dimensions, 
\begin{eqnarray}\label{example_4}
	\partial_t^2 u  &=&  \Delta  u - \sigma \left( 1+ \frac{1}{5} \cos \omega t\right) \!x^2 y^2u, \quad x,y \in [-\pi,\pi] , \quad t \in [0, 1] , \\
	\nonumber
	u(x,y,0)  &=&  \ee^{-\frac{1}{2} (x-3)^2} +  \ee^{-\frac{1}{2} (x+3)^2}+ \ee^{-\frac{1}{2} (y-3)^2} +  \ee^{-\frac{1}{2} (y+3)^2} , \quad u'(x,y,0) = 0;\\
	\nonumber
	u(-\pi,-\pi,t)  &=&  u(\pi,\pi,t) = u(\pi,-\pi,t) = u(\pi,-\pi,t), \quad t \in [0, 1].
\end{eqnarray}

The above example is used to show that  the methods $\Gamma^{[4]}_1$ and $\Gamma_2^{[4]}$  can also be used for problems in higher spatial dimensions. Although we present an example in two spatial dimensions, these methods can be  extended to higher dimensions, but as the number of spatial dimensions $ d $ increases, the size of the matrix resulting from semidiscterization grows as $ M^d $. Thus, such calculations require either significant outlay in computing power or further range of tools from numerical linear algebra to reduce computational cost.  Accuracy and efficiency are presented in Figure \ref{fig:6}.  

\begin{figure}[h]
	\centering
	\includegraphics[width=0.9\linewidth]{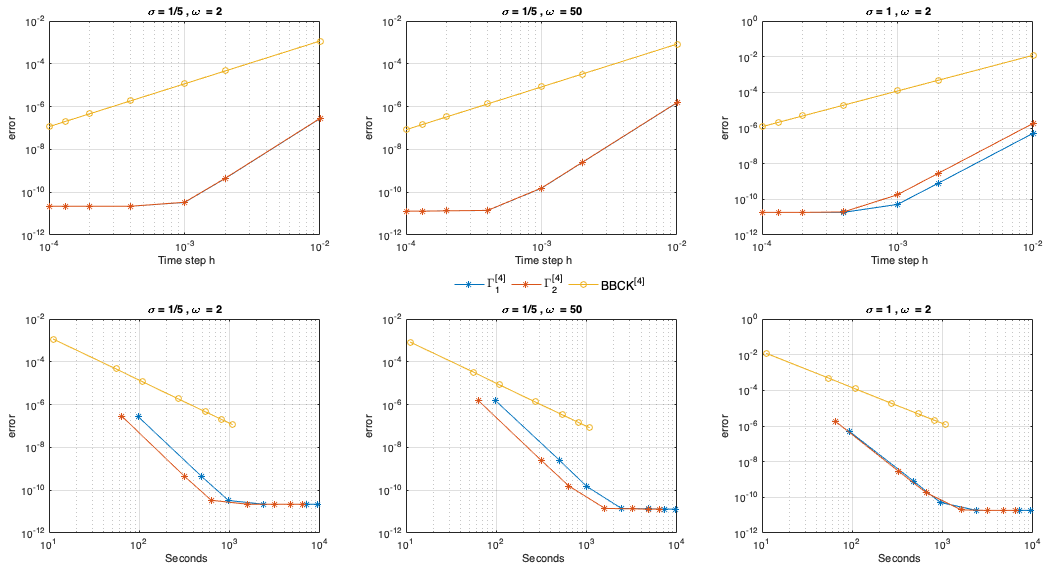}
	\caption{Comparison of orders (first row) and of costs in seconds (second row) for equation (\ref{example_4})  (where $\xx\in \R^2$) . 
	}
	\label{fig:6}
\end{figure}

\medskip
\noindent
\textit{Example 4.} \\
\noindent
In this example we consider a wave equation 
\begin{eqnarray}\label{example_5}
	\partial_t^2 u &=&  \partial_x^2 u - \dfrac{1}{1+t^2}\left( 1+ \frac{1}{5} \cos \omega t\right)\! x^2 u, \quad x \in [-\pi,\pi], \quad t \in [0, 1]; \\
	\nonumber
	u(x,0) &=&  \ee^{-\frac{1}{2} (x-3)^2} +  \ee^{-\frac{1}{2} (x+3)^2} , \quad u'(x,0) = 0 ; \\
	\nonumber
	u(-\pi,t) &=&  u(\pi,t) , \quad t \in [0, 1] ,
\end{eqnarray}
where the non-oscillatory part is $  -\dfrac{1}{1+t^2} x^2 $ and the oscillatory part $ \dfrac{1}{5} \dfrac{1}{1+t^2}  \cos (\omega t) x^2   $ are both time and space dependent. Accuracy of the methods is displayed in Figure \ref{fig:7}.  Methods $ \Gamma^{[4]}_1$ and $\Gamma^{[4]}_2  $ present the same accuracy and error constant.  $ \Gamma^{[4]}_1$ is slightly more computationally costly then $\Gamma^{[4]}_2  $, but both methods are considerably more accurate then method BBCK$^{[4]}  $  and much less computationally costly then  the 6th order method BBCK$^{[6]}. $ Method $ \Xi^{[3]} $ preserves  third order of the convergence uniformly in $ \omega $, while  methods $\Gamma^{[4]}_1$ and $\Gamma_2^{[4]}$ are of order four with much smaller error constant than method $ \Xi^{[3]} $.

\begin{figure}[h]
	\centering
	\includegraphics[width=0.9\linewidth]{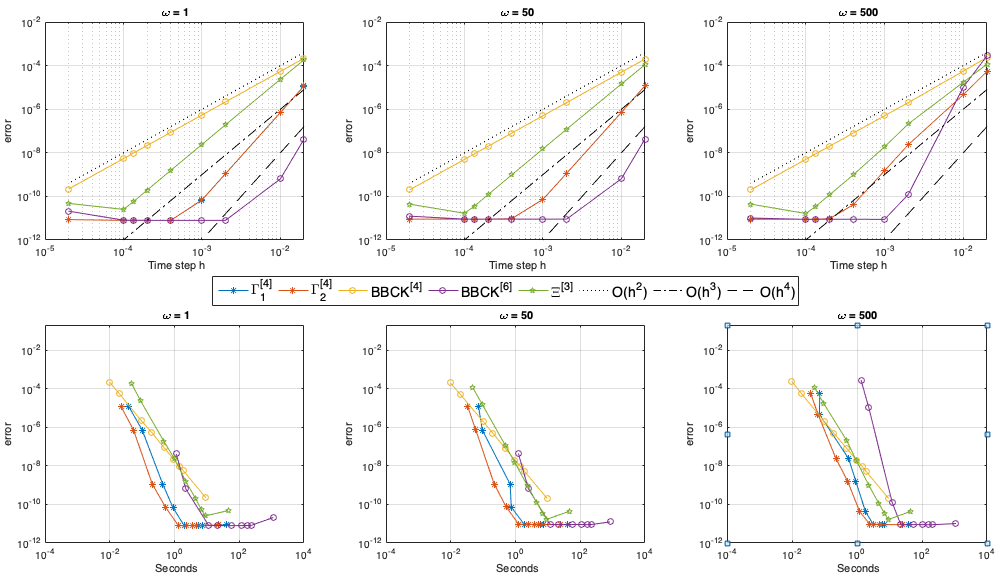}
	\caption{Comparison of orders (first row) and of cost in seconds (second row) for equation (\ref{example_5}).}
	\label{fig:7}
\end{figure}

\newpage
\medskip
\noindent\textit{Example 5.}\\
\noindent
Last but not least we consider an equation where the input term features a wide range of frequencies,
\begin{eqnarray}\label{example_6}
	\partial_t^2 u &=&  \partial_x^2 u - \sum_{k=0}^{5} \left(  1+\cos (10^k t) \right) x^2 u, \quad x\in [-\pi,\pi] , \quad t \in [0, 1], \\
	\nonumber
	u(x,0) &=& e^{-\frac{1}{2} (x-3)^2} +  e^{-\frac{1}{2} (x+3)^2} , \quad \partial_t u (x,0) = 0, \\
	\nonumber
	u(-\pi,t) &=& u(\pi,t) , \quad t \in [0, 1] .
\end{eqnarray}
On this example we can see, that order of the method is not always  crucial - method $ \Xi^{[3]} $ preserves  third order uniformly,  while  methods $\Gamma^{[4]}_1$ and $\Gamma_2^{[4]}$ have much better accuracy. Moreover,  methods $\Gamma^{[4]}_1$ and $\Gamma_2^{[4]}$ are significantly less computationally costly  than other presented methods. 
\begin{figure}[h]
	\centering
	\includegraphics[width=0.9\linewidth]{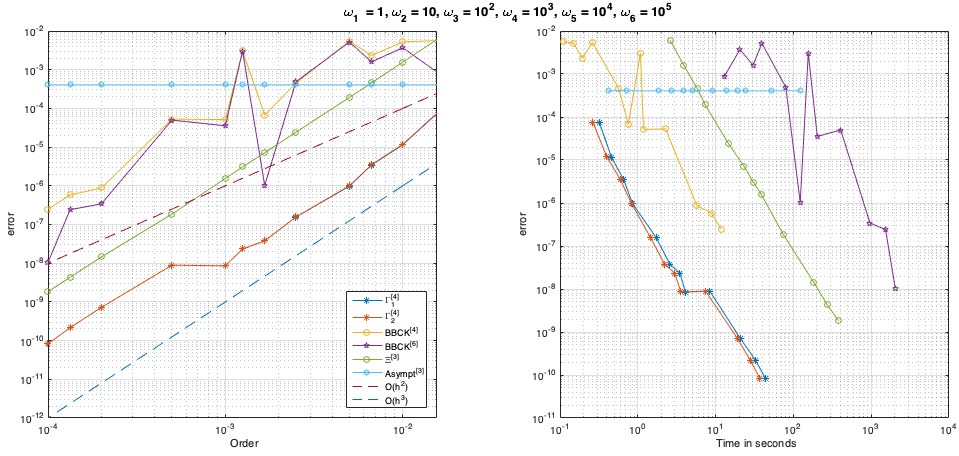}
	\caption{ We present the results for equation (\ref{example_6}) where the input term features very slow oscillations  $\omega_1=1$ and high,  $\omega_6=10^5$. We can observe that the new methods did not attain fourth order convergence (because the time step $h=10^{-4}$ is larger than $\omega_{\rm max}^{-1}=10^{-5}$) yet the accuracy is significantly better than of all other methods. Moreover, computational times of methods $\Gamma^{[4]}_1$ and $\Gamma_2^{[4]}$ are  much smaller compared to other methods. 
	}
	\label{fig:omegas_sum}
\end{figure}

According to  the error estimates (see Figures \ref{fig:orders3}--\ref{fig:order_1D_2D}  and Theorem \ref{thm:errorS}) the new methods commit an error scaling at least like $\mathcal{O}(h^2)$ globally. In Figure \ref{fig:omegas_sum} we can observe, that the performance of new methods is fairly steadily for all time steps. Moreover,  as expected,  the asymptotic method performs very purely because of the presence of the low frequency $\omega_1$. On the other hand, large oscillations like $\omega_6$ sabotage other Magnus-based methods because of their approximation of highly oscillatory  integrals.

\subsection*{Acknowledgments}

We are grateful to Arieh Iserles for his friendship, encouragement and invaluable editorial support.

The work of Katharina Schratz  in this project was funded by the European Research Council (ERC) under the European Union's Horizon 2020 research and innovation programme (grant agreement No. 850941).

The work of Karolina Kropielnicka and Karolina Lademann in this project was funded by the National Science Centre (NCN) project no. 2019/34/E/ST1/00390.

Numerical simulations were carried out by Karolina Lademann at the Academic Computer Center in Gda\'{n}sk (CI TASK).

The authors with to thank the Isaac Newton Institute for Mathematical Sciences for support and hospitality during the programme ``Geometry, compatibility and structure preservation in computational differential equations'', supported by EPSRC grant EP/R014604/1, where this work has been initiated.

This work was partially financed by Simons Foundation Award No. 663281 granted to the Institute of Mathematics of the Polish Academy of Sciences for the years 2021--2023.

\appendix
\def\theequation{\Alph{section}.\arabic{equation}}

\section{Simplification in equations (\ref{mag1})--(\ref{mag4})}\label{apA}

Let us recall that $ f(t) =  \alpha(t) + \sum_{|n|\leq N} a_n(t) \ee^{i \omega_n t} $ and
$$ A( t_1)  = \left[ \begin{array}{cc}
	0 & 1\\
	\Delta  +  f(t_1) & 0
\end{array} 
\right], \quad 
[A(t_2),A(t_1)] = 
\left[  
\begin{array}{cc}
	f( t_1)-f(t_2) & 0 \\
	0 & f( t_2)-f(t_1)
\end{array}
\right].$$

In the following part we are calculating twofold and threefold nested commutators. 
\begin{eqnarray*}
	\left[ A( t_1) , \left[ A( t_2) , A( t_3) \right] \right] & =& \left[  \begin{array}{cc}
		0 & 1 \\
		\Delta +f(t_1) & 0
	\end{array} \right] \left[  \begin{array}{cc}
		f(t_3)-f( t_2) & 0 \\
		0 &f( t_2)-f(  t_3)
	\end{array} \right] \\
	&&\mbox{}- \left[  \begin{array}{cc}
		f(t_3)-f(t_2) & 0 \\
		0 &f( t_2)-f( t_3)
	\end{array} \right]  \left[  \begin{array}{cc}
		0 & 1 \\
		\Delta +f(  t_1) & 0
	\end{array} \right] \\
	&=& \left[  \begin{array}{cc}
		0 & f( t_2)-f( t_3) \\
		\left[\Delta +f( t_1)\right] \left[f(  t_3)-f(  t_2) \right]   & 0
	\end{array} \right] \\
	&&\mbox{}- \left[  \begin{array}{cc}
		0 & f(  t_3)-f( t_2) \\
		\left[f( t_2)-f( t_3) \right]\left[\Delta +f( t_1)\right]    & 0
	\end{array} \right] \\
	&=& \left[  \begin{array}{cc}
		0 & H_1 \\
		H_2  & 0
	\end{array} \right]\!,
\end{eqnarray*}
where
\begin{eqnarray*}
	H_1 & = & 2 \left[f(  t_2)-f(t_3) \right]\\
	\nonumber
	H_2 & =& \left[\Delta +f( t_1)\right] \left[f( t_3)-f(  t_2) \right]  - \left[f( t_2)-f(  t_3) \right]\left[\Delta +f(t_1)\right]  \\
	& =&  \Delta  f( t_3) + f(  t_3) \Delta  - \Delta  f( t_2) - f( t_2) \Delta  + 2 f(  t_1) f(  t_3) - 2 f(  t_1) f( t_2) . 
\end{eqnarray*}
Analogously
\begin{displaymath}
	\left[ A( t_3) , \left[ A( t_2) , A( t_1) \right] \right] =  \left[  \begin{array}{cc}
		0 & H_3\\
		H_4 & 0
	\end{array} \right]\!,
\end{displaymath}
where
\begin{eqnarray*}
	H_3& = & 2 \left[f( t_2)-f(  t_1) \right],\\
	H_4 & =& \Delta  f( t_1) + f(  t_1) \Delta  - \Delta  f(  t_2) - f(  t_2) \Delta  + 2 f(  t_3) f(t_1) - 2 f( t_3) f(  t_2).
\end{eqnarray*}

For the threefold nested commutators we have 
\begin{eqnarray*}
	\left[A(t_4),  \left[ \left[ A(t_1),A(t_2) \right] , A(t_3)\right] \right]& = &  \left[  \begin{array}{cc}
		0 & 1 \\
		\Delta +f(  t_4) & 0
	\end{array} \right]\left[  \begin{array}{cc}
		0 & H_5 \\
		H_6 & 0
	\end{array} \right] \\
	&&\mbox{}-\left[  \begin{array}{cc}
		0 & H_5 \\
		H_6 & 0
	\end{array} \right] \left[  \begin{array}{cc}
		0 & 1 \\
		\Delta +f(  t_4) & 0
	\end{array} \right] \\
	&=&    \left[  \begin{array}{cc}
		H_6 &  0\\
		0 &\left[ \Delta +f(  t_4) \right] H_5
	\end{array} \right] - \left[  \begin{array}{cc}
		H_5  \left[ \Delta +f(  t_4) \right] &  0\\
		0 & H_6
	\end{array} \right]  \\
	&=& \left[  \begin{array}{cc}
		H_6 -	H_5\left[ \Delta +f(  t_4) \right]   &  0\\
		0 & \left[ \Delta +f(  t_4) \right]H_5  -H_6
	\end{array} \right]\!,
\end{eqnarray*}
where
\begin{eqnarray*}
	H_5& = & 2 \left[f(  t_2)-f(  t_1) \right],\\
	H_6 & =&\Delta  \left[ f(  t_1)  - f(  t_2)\right] +\left[ f(  t_1)  - f(  t_2)\right]  \Delta  + 2 f(  t_3) \left( f(  t_1)  - f(  t_2)\right).
\end{eqnarray*}

Likewise,
\begin{displaymath}
	\left[ A(t_1), \left[  \left[  A(t_2), A(t_3)\right] , A(t_4) \right] \right]   =
	\left[  \begin{array}{cc}
		H_8 -H_7  \left( \Delta +f(  t_1) \right) &  0\\
		0 &   \left[ \Delta +f(  t_1) \right] H_7 - H_8
	\end{array} \right]\!,
\end{displaymath}  
with
\begin{eqnarray*}
	\nonumber
	H_7& = & 2 \left[f(  t_3)-f(  t_2) \right]\\
	\nonumber
	H_8 & =&\Delta  \left[  f(  t_2) -  f(  t_3) \right]+\left[  f(  t_2) -  f(  t_3) \right]\Delta  +2 f(  t_4) \left[  f(  t_2) -  f(  t_3) \right].
\end{eqnarray*}

Finally,
\begin{eqnarray*}
	\left[ A(t_1), \left[ A(t_2),\left[ A(t_3) , A(t_4)\right] \right] \right]  & =& \left[  \begin{array}{cc}
		0 & 1 \\
		\Delta +f(  t_1) & 0
	\end{array} \right]\left[  \begin{array}{cc}
		0 & H_9 \\
		H_{10} & 0
	\end{array} \right] \\
	&&\mbox{}-  \left[  \begin{array}{cc}
		0 & H_9 \\
		H_{10} & 0
	\end{array} \right]\left[  \begin{array}{cc}
		0 & 1 \\
		\Delta +f(  t_1) & 0
	\end{array} \right] \\
	& =&\left[  \begin{array}{cc}
		H_{10}  &  0\\
		0 & \left[ \Delta +f(  t_4) \right] H_9
	\end{array} \right] - \left[  \begin{array}{cc}
		H_9 \left[ \Delta +f(  t_4) \right] &  0\\
		0 & H_{10}
	\end{array} \right] \\
	& =&\left[  \begin{array}{cc}
		H_{10} -H_9 \left[ \Delta +f(  t_1) \right] &  0\\
		0 &   \left[ \Delta +f(  t_1) \right] H_9 - H_{10}
	\end{array} \right]\!,
\end{eqnarray*}  
where
\begin{eqnarray}
	\nonumber
	H_9& = & 2 \left[f(  t_3)-f(  t_4) \right],\\
	\nonumber
	H_{10} & =& \Delta  \left[  f(  t_4) -  f(  t_3) \right]+\left[  f(  t_4) -  f(  t_3) \right]\Delta  +2 f(  t_2) \left[  f(  t_4) -  f(  t_3) \right]
\end{eqnarray}
and	
\begin{displaymath}
	\left[ A(t_2), \left[ A(t_3),\left[ A(t_4) , A(t_1)\right] \right] \right]  = 
	\left[  \begin{array}{cc}
		H_{12} -  H_{11} \left[ \Delta +f(  t_2) \right]  &  0\\
		0 &  \left[ \Delta +f(  t_2) \right]   H_{11} - H_{12}
	\end{array} \right]\!,
\end{displaymath} 	
where
\begin{eqnarray*}
	H_{11}& = & 2 \left[f(  t_4)-f(  t_1) \right]\\
	H_{12} & =& \Delta \left[  f(  t_1) -  f(  t_4) \right]+\left[  f(  t_1) -  f(  t_4) \right]\Delta  +2 f(  t_3) \left[  f(  t_1) -  f(  t_4) \right].
\end{eqnarray*}

To estimate term $ \Theta_4 $ we need to aggregate the matrices originating in individual commutators. The result is a matrix $ \left[  \begin{array}{cc}
	\mathcal{H}_1 & 0\\
	0 & \mathcal{H}_2 
\end{array} \right]\! , $ where
\begin{eqnarray*}
	\mathcal{H}_1	& =& -H_6 +	H_5\left[ \Delta +f(  t_4) \right]   + H_8-	H_7\left[ \Delta +f(  t_1) \right]  +H_{10 }-	H_9\left[ \Delta +f(  t_4) \right]  +H_{12} \\
	&&\mbox{}-	H_{11}\left[ \Delta +f(  t_2) \right]  \\
	& = & \Delta  \left[ f(  t_2)  - f(  t_1)\right] +\left[ f(  t_2)  - f(  t_1)\right]  \Delta  + 2 f(  t_3) \left[ f(  t_2)  - f(  t_1)\right] - 2 \left[f(  t_1)-f(  t_2) \right]\left[ \Delta +f(  t_4) \right] \\
	&&\mbox{} + \Delta  \left[  f(  t_2) -  f(  t_3) \right]+\left[  f(  t_2) -  f(  t_3) \right]\Delta  +2 f(  t_4) \left[  f(  t_2) -  f(  t_3) \right] - 2 \left[f(  t_3)-f(  t_2) \right]\left[ \Delta +f(  t_1) \right] \\
	&&\mbox{} +  \Delta  \left[  f(  t_4) -  f(  t_3) \right]+\left[  f(  t_4) -  f(  t_3) \right]\Delta  +2 f(  t_2) \left[  f(  t_4) -  f(  t_3) \right] - 2 \left[f(  t_3)-f(  t_4) \right]\left[ \Delta +f(  t_1) \right] \\
	& &\mbox{}+ \Delta \left([ f(  t_1) -  f(  t_4) \right]+\left[  f(  t_1) -  f(  t_4) \right]\Delta  +2 f(  t_3) \left[  f(  t_1) -  f(  t_4) \right] - 2 \left[f(  t_4)-f(  t_1) \right]\left[ \Delta +f(  t_2) \right] \\
	& =& \Delta  \left[  2f(  t_2)  -2 f(  t_3)  \right] +3 \left[  2f(  t_2)  -2 f(  t_3)  \right]\Delta  + 4f(  t_4) \left([ f(  t_2)  -f(  t_3)  \right] + 4f(  t_1) \left[  f(  t_2)  - f(  t_3)  \right]
\end{eqnarray*}
and 
\begin{displaymath}
	\mathcal{H}_2	=  -3\Delta  \left[  2f(  t_2)  -2 f(  t_3)  \right] - \left[  2f(  t_2)  -2 f(  t_3)  \right]\Delta  - 4f(  t_4) \left[  f(  t_2)  -f(  t_3)  \right] - 4f(  t_1) \left[  f(  t_2)  - f(  t_3)  \right] , 
\end{displaymath}
where 
\begin{eqnarray*}
	f(t_k) f( t_l) & =& \left[\alpha(t_k) + \sum_{|n|\leq N} a_n (t_k) \ee^{i \omega_n t_k}\right] \left[\alpha(t_l) +\sum_{|n|\leq N} a_n (t_l)  \ee^{i \omega_n t_l}\right] \\
	& =& \alpha(t_k) \alpha(t_l) + \alpha(t_k) \sum_{|n|\leq N} a_n(t_l)   \ee^{i \omega _nt_l} + \alpha(t_l) \sum_{|n|\leq N} a_n(t_k)   \ee^{i \omega_n t_k}  \\
	&&\mbox{}+ \sum_{|n|\leq N} a_n(t_k) \ee^{i \omega_n t_k} \sum_{|m|\leq N}a_m(t_l) \ee^{i \omega_m t_l} .
\end{eqnarray*}

\section{Simplification in Strang splitting error in Section \ref{STRANG_CALCULATIONS} }\label{strang_est}

Taking
\begin{eqnarray*}
	X &=& - \int_0^h\int_0^{t_1} [A(t+t_2),A(t+t_1)]\dd t_1 \dd t_2 =	\left[  \begin{array}{cc}
		-\mathcal{F}  & 0 \\
		0 & \mathcal{F} 
	\end{array} \right]\!,\\
	Y & = & \int_0^h \left[  \begin{array}{cc}
		0 & 1 \\
		\Delta  + f(t+t_1)& 0 
	\end{array} \right] \dd t_1 = \left[  \begin{array}{cc}
		0 & h \\
		h	\Delta  +	F & 0 
	\end{array} \right]\!, 
\end{eqnarray*}
we have 
\begin{eqnarray*}
	\nonumber
	[Y,X] &=&  \left[  \begin{array}{cc}
		0 & h \\
		h\Delta +F & 0 
	\end{array} \right] \left[  \begin{array}{cc}
		-\mathcal{F}  & 0 \\
		0 & \mathcal{F} 
	\end{array} \right]  - \left[  \begin{array}{cc}
		-\mathcal{F}  & 0 \\
		0 & \mathcal{F} 
	\end{array} \right] \left[  \begin{array}{cc}
		0 & h \\
		h\Delta +F & 0 
	\end{array} \right] \\
	\nonumber
	&=&  \left[  \begin{array}{cc}
		0 & h\mathcal{F} \\
		-(h\Delta +F) \mathcal{F}& 0 
	\end{array} \right]  -  \left[  \begin{array}{cc}
		0 & -h\mathcal{F} \\
		\mathcal{F}(h\Delta +F) & 0 
	\end{array} \right] \\
	&=&  \left[  \begin{array}{cc}
		0 & 2h\mathcal{F} \\
		-h \Delta  \mathcal{F} - h \mathcal{F} \Delta  - 2\mathcal{F} F & 0 
	\end{array} \right]\!, \\
	\nonumber
	[Y,[Y,X]] &=& \left[  \begin{array}{cc}
		0 & h \\
		h\Delta +F & 0 
	\end{array} \right] \left[  \begin{array}{cc}
		0 & 2h\mathcal{F} \\
		-h \Delta  \mathcal{F} - h \mathcal{F} \Delta  - 2\mathcal{F} F & 0 
	\end{array} \right] \\
	\nonumber
	&&\mbox{} -  \left[  \begin{array}{cc}
		0 & 2h\mathcal{F} \\
		-h \Delta  \mathcal{F} - h \mathcal{F} \Delta  - 2\mathcal{F} F & 0 
	\end{array} \right] \left[  \begin{array}{cc}
		0 & h \\
		h\Delta +F & 0 
	\end{array} \right] \\
	\nonumber
	&=& \left[  \begin{array}{cc}
		-h^2 \Delta  \mathcal{F} -3 h^2 \mathcal{F} \Delta   -  4h \mathcal{F} F	 & 0\\
		0	& 3h^2 \Delta  \mathcal{F} +h^2 \mathcal{F} \Delta   +  4h \mathcal{F} F	
	\end{array} \right] \! \\ \nonumber
	[X,Y] = - [Y,X]  &=&  \left[  \begin{array}{cc}
		0 & -2h\mathcal{F} \\
		h \Delta  \mathcal{F}  +h \mathcal{F} \Delta  +2\mathcal{F} F & 0 
	\end{array} \right]\,  \end{eqnarray*}

\begin{eqnarray*}
	[X,[X,Y]] &=& \left[  \begin{array}{cc}
		-\mathcal{F}  & 0 \\
		0 & \mathcal{F} 
	\end{array} \right] \left[  \begin{array}{cc}
		0 & -2h\mathcal{F} \\
		h \Delta  \mathcal{F}  +h \mathcal{F} \Delta  +2\mathcal{F} F & 0 
	\end{array} \right] \\
	\nonumber
	&&\mbox{} -  \left[  \begin{array}{cc}
		0 & -2h\mathcal{F} \\
		h \Delta  \mathcal{F}  +h \mathcal{F} \Delta  +2\mathcal{F} F & 0 
	\end{array} \right] \left[  \begin{array}{cc}
		-\mathcal{F}  & 0 \\
		0 & \mathcal{F} 
	\end{array} \right] \\
	\nonumber
	& =& \left[  \begin{array}{cc}
		0 & 4h \mathcal{F}^2\\
		2h\mathcal{F} \Delta  \mathcal{F} + h\mathcal{F}^2 \Delta  + h \Delta  \mathcal{F}^2 + 4\mathcal{F}^2 F& 0 
	\end{array} \right]\!.
\end{eqnarray*}

\bibliographystyle{apalike}

% Loading bibliography database
\bibliography{document}

\end{document}